\newcommand{\spec}{\mathrm{Spec}}
\newcommand{\Tmf}{\mathrm{Tmf}}
\newcommand{\A}{\mathbb{A}}
\newcommand{\tmf}{\mathrm{tmf}}
\newcommand{\gpd}{\mathbf{Gpd}}
\renewcommand{\A}{\mathcal{A}_*}
\newcommand{\st}{\mathrm{Stack}}
\newcommand{\otop}{\mathcal{O}^{\mathrm{top}}}
\newcommand{\mell}{M_{\overline{ell}}}
\renewcommand{\hom}{\mathrm{Hom}}
\renewcommand{\rightrightarrows}{\begin{smallmatrix} \to \\
\to \end{smallmatrix} }
\newcommand{\triplearrows}{\begin{smallmatrix} \to \\ \to \\ 
\to \end{smallmatrix} }
\theoremstyle{theorem}
\newtheorem{theorem}{Theorem}[section]
\newtheorem{lemma}[theorem]{Lemma}
\newtheorem{corollary}[theorem]{Corollary}
\newtheorem{proposition}[theorem]{Proposition}
\newtheorem*{tthm}{Technical theorem}
\newtheorem*{lm}{Lemma}
\theoremstyle{definition}
\newtheorem{definition}[theorem]{Definition}
\newtheorem{example}[theorem]{Example}
\newtheorem*{conv}{Convention}
\newtheorem*{cons}{Construction}
\newtheorem*{remark}{Remark}
\begin{document}

\theoremstyle{definition}
\title{The homology of $\tmf$}
\date{\today}
\author{Akhil Mathew}
\email{amathew@math.harvard.edu}
\address{Department of Mathematics, Harvard University, Cambridge, MA}
\keywords{topological modular forms, algebraic stacks, Steenrod algebra}

\begin{abstract}
We compute the mod $2$ homology of the spectrum $\tmf$ of topological modular
forms by proving a 2-local equivalence $\tmf \wedge DA(1) \simeq \tmf_1(3)
\simeq BP\left
\langle 2\right\rangle$, where $DA(1)$ is an eight cell complex whose
cohomology  doubles  the subalgebra $\mathcal{A}(1)$ of the Steenrod
algebra generated by $\mathrm{Sq}^1$ and $\mathrm{Sq}^2$. To do so, we give,
using the
language of stacks, a modular
description of the elliptic homology of $DA(1)$ via level three structures. We
briefly discuss analogs at odd primes and recover the stack-theoretic
description of the Adams-Novikov spectral
sequence for $\tmf$. 
\end{abstract}
\setcounter{tocdepth}{2}

\maketitle
\section{Introduction}

\subsection{Results}
Let $\tmf$ be the $E_\infty$-ring spectrum 
of connective \emph{topological modular forms} of Goerss, Hopkins, Mahowald,
Miller, and Lurie. See \cite{goersssurvey} for a survey and
\cite{rezk512, TMF} for detailed
treatments. 
The spectrum $\tmf$ is constructed from a derived version of the moduli  stack $\mell$
of elliptic curves, and its homotopy groups approximate both the
stable homotopy groups of spheres and the ring of integral modular forms. 

The primary goal of this paper is to compute the mod 2 cohomology of $\tmf$, as a
module over the Steenrod algebra.  Namely,
we give a proof of the following result: 

\begin{theorem}[Hopkins-Mahowald \cite{HM}] 
There is an isomorphism
\begin{equation} \label{Ctmf} H^*(\tmf; \mathbb{Z}/2) \simeq \mathcal{A} // \mathcal{A}(2)
\stackrel{\mathrm{def}}{=} 
\mathcal{A} \otimes_{\mathcal{A}(2)} \mathbb{Z}/2,  \end{equation}
where $\mathcal{A}$ is the mod 2 Steenrod algebra and $\mathcal{A}(2) \subset \mathcal{A}$ is the subalgebra generated by
$\mathrm{Sq}^1, \mathrm{Sq}^2, \mathrm{Sq}^4$.
\end{theorem} 
 The computation is carried out
by exhibiting a 2-local eight cell complex $DA(1)$ and 
demonstrating an equivalence (also due to \cite{HM}),
\begin{equation} \label{tmfDA1} \tmf \wedge DA(1) \simeq BP \left \langle
2\right\rangle,  \end{equation}
which is a $\tmf$-analog of Wood's theorem $ko \wedge
\Sigma^{-2} \mathbb{CP}^2 \simeq ku$. This equivalence implies the result on
$H^*(\tmf; \mathbb{Z}/2)$ using Hopf algebra manipulations, and it enables the
description of the Adams-Novikov spectral sequence of $\tmf$ via the
Weierstrass Hopf algebroid or the moduli stack of cubic curves.

The form of $BP\left \langle 2\right\rangle$ encountered
can be identified with the spectrum $\tmf_1(3)$ of  connective topological modular forms
\emph{of level 3} studied by Hill-Lawson \cite{HillLawson} and by
Lawson-Naumann \cite{LN}. 
In particular, we prove: 

\begin{theorem} At the prime $2$, we have an equivalence of $\tmf$-modules
\begin{equation} \label{tmfDA12} \tmf \wedge DA(1) \simeq \tmf_1(3).
\end{equation}
\end{theorem} 

At the prime 3, there is an analog as well: one has a three cell complex $X_3$ such that
\begin{equation} \label{tmfDA13} \tmf \wedge X_3 \simeq \tmf_1(2) .
\end{equation}

\subsection{Methods} We now give a brief overview of the method of proof used
in this paper. 

Suppose we knew the homology of $\tmf$, i.e., \eqref{Ctmf}. 
In this case, one could construct the eight cell complex $DA(1)$ such that, as
an $\mathcal{A}(2)$-module, we have $$H^*(DA(1); \mathbb{Z}/2)  \simeq
\mathcal{A}(2)/(\mathrm{Sq}^1
)$$ 
We would then see by direct computation that
$\tmf \wedge DA(1)$ had the same homology as $BP\left \langle 2\right\rangle$
and could produce the equivalence 
\eqref{tmfDA1} with some more effort (or by appealing to \cite{AL15} at least
after 2-completion).  
In this paper, we will work in reverse: we will prove \eqref{tmfDA1} directly
by working with stacks and then deduce the description of the homology. 

Our proof of \eqref{tmfDA1}  proceeds by
first replacing $\tmf$ by $\Tmf$, 
the non-connective, non-periodic version of $\tmf$, and by working with the
derived version of $M_{\overline{{ell}}}$. 
In particular, we calculate $\Tmf_*(DA(1))$. To do so, we describe the module $E_0(DA(1))$, for $E$ an elliptic homology theory, in
terms of $\mell$: in other words, we identify a certain
vector bundle on the moduli stack of elliptic curves as arising from an
eight-fold cover of $(\mell)_{(2)}$. 

\begin{tthm}
Let $E$ be an elliptic homology theory associated to a generalized elliptic curve $C \to
\spec (R)$, classified by a flat map $\spec(R) \to \mell$.
Then there is a natural identification
of $R = E_0$-modules between 
$E_0( DA(1))$ 
and the  universal $R$-algebra over which $C$ acquires a
$\Gamma_1(3)$-structure.  
\end{tthm}

This proof of identification relies on a basic technical trick, of which we explain a more elementary
form. 
\begin{cons}
Consider the scheme $\mathbb{P}^n_{\mathbb{C}} = \left( \spec
(\mathbb{C}[x_0,
x_1, \dots, x_n]) \setminus (0, 0, \dots, 0) \right) / \mathbb{G}_m$. 
This scheme is very well-behaved; it is smooth and proper. It is an open
substack of the Artin stack $\mathfrak{X} \subset 
\spec (\mathbb{C}[x_0,
x_1, \dots, x_n])/\mathbb{G}_m$. 
\end{cons}

On the one hand, the geometry of $\mathfrak{X}$
is much worse than that of $\mathbb{P}^n_{\mathbb{C}}$: it has a special point $x$, given by the image of the origin, whose
stabilizer is a $\mathbb{G}_m$. On the other hand, the study of vector bundles
or coherent sheaves on $\mathfrak{X}$ is vastly simpler than the analogous
study on $\mathbb{P}^n_{\mathbb{C}}$, as they are given by
graded modules over a polynomial ring. 
As an example, one has an analog of Nakayama's lemma:

\begin{lm}
Any morphism of coherent sheaves
\( \mathcal{F}  \to \mathcal{G} \)
on $\mathfrak{X}$ inducing a surjection at the point $x$ is a surjection
globally. 
\end{lm}

In this paper, we will work with the moduli stack $\mell$ of
generalized elliptic curves, which is analogous to
$\mathbb{P}^n_{\mathbb{C}}$: it has good geometry, but it generally hard to
study vector bundles on it. There is, however, another moduli stack $M_{cub}$
classifying \emph{cubic} curves,
which contains $\mell$ as an open substack. As in the above analogy,
$M_{cub}$ has a distinguished point with a bigger stabilizer for which one can
prove an analog of Nakayama's lemma.
Our main identification relies on the observation that the
association $E \mapsto E_0(DA(1))$, which defines a vector  bundle
on $(\mell)_{(2)}$, actually canonically extends to the larger stack
$(M_{cub})_{(2)}$. 
As a result, it is possible to get a handle on this vector bundle using the
distinguished point in $M_{cub}$. 
After identifying the vector bundle, we use the descent spectral sequence
to compute $\Tmf_*(DA(1))$.
Finally, the description of $\tmf_*(DA(1))$
from that of $\Tmf_*(DA(1))$ follows from the gap theorem in $\pi_* \Tmf$. 

The above summarizes the key technical work in the paper. 
In the rest of the paper, we calculate the homology $H_*( \tmf; \mathbb{Z}/2)$
by studying the map $\tmf \to \tmf \wedge DA(1) \simeq BP\left \langle
2\right\rangle$ and determining the image in homology, as the homology of
$BP\left \langle 2\right\rangle$ is known as a subalgebra of the dual Steenrod
algebra. 
The precise determination of the image relies on a few techniques with Hopf
algebras.

This paper is organized as follows. Section 2 reviews the language of stacks
and, in
particular, the role of the moduli stack $M_{FG}$ of formal groups and states
the results we need about $\tmf$.  Section 3 is purely algebraic and describes a vector
bundle on the stack $\mell$. Section 4 shows that this vector  bundle arises
from the eight cell
complex $DA(1)$ and discusses the analog at odd primes. Section 5 contains the
main remaining computations, in particular, of the homology.

\subsection{Previous work}
There is a significant literature on $\tmf$ and the homology of $\tmf$ has
certainly been treated before. 
For instance, the notes of Rezk \cite[sec. 20--21]{rezk512} give an entirely different
approach to the calculation of $H_*(\tmf; \mathbb{Z}/2)$. Rezk's starting point
is different from ours; we take the description via sheaves of spectra as
given, whereas Rezk assumes the $\tmf$-homology of the Thom spectrum $X(4)$. 
In particular, the calculation of $MU_* \tmf$  is carried out in 
\cite[Prop. 20.4]{rezk512} and we will reprove it here as well. 

The equivalence \eqref{tmfDA1} is due to Hopkins-Mahowald. The existence of an
eight cell complex 
$DA(1)$ with the property that $\tmf \wedge DA(1)$ is complex-orientable arises
from the heuristic that the moduli stack of cubic curves has an eight-fold 
cover at the prime 2 which is the quotient by $\mathbb{G}_m$ of an affine
scheme (cf. the notes to Hopkins's talk
in \cite[Ch. 9]{TMF}).

\subsection*{Acknowledgments}
I would like to thank heartily Mike Hopkins for 
suggesting this project and sharing his ideas. I would also like to thank
Mark Behrens, Dustin
Clausen, Tyler Lawson, Jacob Lurie, Lennart Meier, Niko Naumann, and Vesna
Stojanoska for helpful discussions, and the referee for numerous comments. 
\section{The language of stacks}

Let $X$ be a spectrum. The homotopy groups $\pi_* X$ may be complicated, but
often their calculation can be attacked by choosing an appropriate resolution
of $X$ by simpler spectra. This formalism can be expressed efficiently using
the language of stacks. 
The language  has been described in the course notes \cite{COCTALOS, chromatic} and
in the talk of Hopkins \cite[Ch. 7]{TMF}. Other references on the this viewpoint, especially on
the moduli stack
of formal groups,  are \cite{naumann} and
\cite{goerss}. We will briefly summarize what we need below.

\subsection{Stacks and spectra}

Suppose that $R$ is a fixed  $A_\infty$-ring
spectrum. The structure on $R$ enables one to build the cobar
construction, a cosimplicial spectrum $\mathrm{CB}^\bullet(R)$
with 
$\mathrm{CB}^s(R) = R^{\wedge (s+1)}$ and with the coface and codegeneracy
maps arising in a standard manner from the unit $S^0 \to R$ and the multiplication $R \wedge R \to R$. 
Under good conditions,  for a spectrum $X$, the cosimplicial diagram
\begin{equation}  R^\bullet(X) \stackrel{\mathrm{def}}{=} X \wedge
\mathrm{CB}^\bullet(R) =  \{ X \wedge R
\rightrightarrows X \wedge R \wedge R  \triplearrows \dots  \}  \end{equation}
will be a resolution of $X$ in the sense that the natural map
$X \to \mathrm{Tot}( R^\bullet(X))$ 
is an equivalence.
If so, then one has a homotopy spectral sequence
\begin{equation} \label{ss} E_2^{s,t} = \pi^s\pi_{t} R^\bullet(X) \implies
\pi_{t-s} X.  \end{equation}
If $R = MU$ is complex bordism, and $X$ is connective, then  the associated spectral
sequence is the Adams-Novikov spectral sequence.

Suppose now that $R$ is also homotopy commutative, and for each $s$, $\pi_*
R^{\wedge (s+1)}$ is concentrated in even degrees. 
Then we get a cosimplicial commutative ring $\pi_* R^{\wedge (s+1)}$, over which $\pi_*
(R^{\wedge s+1} \wedge X)$ is a cosimplicial module. 
If $R_* R$ is flat over
$R_*$,  the diagram
\[ \pi_* R \rightrightarrows \pi_* ( R \wedge R ) \triplearrows \dots  \]
is determined by its 2-truncation, and it is a commutative \emph{Hopf
algebroid}. This presents  a \emph{stack} $\mathfrak{X}$. 
Furthermore, the cosimplicial $\pi_*
R^\bullet$-module $\pi_* R^\bullet(X)$ defines a quasi-coherent sheaf
 on 
$\mathfrak{X}$, i.e., a
comodule over the Hopf algebroid $(R_*, R_*R)$. 
We shall denote this by $\mathcal{F}(X)$.

The chain complex
\[ \pi_* (R \wedge X) \to \pi_* ( R \wedge R \wedge X) \to \dots , \]
whose cohomology is the $E_2$ page of \eqref{ss}, can be identified with the
 the \emph{cobar complex}
\[ R_*(X) \to R_*(R) \otimes_{R_*} R_* X \to \dots,  \]
which computes the cohomology of the sheaf $\mathcal{F}(X)$. In particular, the spectral sequence \eqref{ss} 
can be written as 
\[ H^s( \mathfrak{X}, \mathcal{F}(X)) \implies \pi_{t-s} X.  \]

It is convenient to make one further modification. The rings $R_*, R_* R$ are graded rings, and the 
sheaves $\mathcal{F}(X)$ come from graded comodules over the Hopf
algebroid $(R_*, R_* R)$. Let us now suppose that 
$R_*, R_* R$ are \emph{evenly graded.} The grading determines (and is
equivalent to) a $\mathbb{G}_m$-action on the Hopf algebroid $(R_*, R_* R)$, or
on the stack $\mathfrak{Y}$,
such that an element in degree $2k$ is acted on by $\mathbb{G}_m$ with
eigenvalue given by 
the character $\chi_k\colon \mathbb{G}_m\to \mathbb{G}_m$, $\chi_k(u) = u^k$. 
We can regard
$\mathcal{F}(X)$ as the sum of comodules $\mathcal{F}_{\mathrm{even}}(X) \oplus
\mathcal{F}_{\mathrm{odd}}(X)$, where each of the two summands inherits a
$\mathbb{G}_m$-action in a similar manner. That is,
$\mathcal{F}_{\mathrm{even}}(X)$ is given a $\mathbb{G}_m$-action in the same
manner, and $\mathbb{G}_m$ acts on $\mathcal{F}_{2k+1}(X)$ by the character
$\chi_k$. 

If we form the stack $\mathfrak{Y} = \mathfrak{X}/\mathbb{G}_m$, then $\mathfrak{Y}$
comes with a tautological line bundle $\omega$, corresponding to the $(R_*, R_*R)$-comodule  $R_{\ast + 2}$.
Moreover,
$\mathcal{F}_{\mathrm{even}}$ and $\mathcal{F}_{\mathrm{odd}}$ descend to
functors into $\mathrm{Mod}(\mathfrak{Y})$, which is equivalent to 
the category of \emph{evenly graded} comodules over $(R_*, R_* R)$.
The spectral sequence can be written
\begin{equation} \label{ANSSstack}  E_2^{s,t} \implies \pi_{t-s} X, \quad \quad E_2^{s, t} =  \begin{cases} 
H^{s}( \mathfrak{Y}, 
\mathcal{F}_{\mathrm{even}}(X) \otimes \omega^{t'}) & \text{if } t = 2t'  \\
H^{s}( \mathfrak{Y}, 
\mathcal{F}_{\mathrm{odd}}(X) \otimes \omega^{t'})  & \text{if } t  = 2t' + 1
 \end{cases}. \end{equation}

We now recall Quillen's theorem.
\begin{definition} 
$M_{FG}$ is the \emph{moduli stack of formal groups.} In other words, 
$M_{FG}$ is the 2-functor
\( M_{FG} \colon  \mathrm{Ring}\to \gpd  \)
assigning to any commutative ring $A$ the groupoid of formal group schemes
$X \to \spec (A)$ which are Zariski locally (on $A$) isomorphic to
$\mathrm{Spf} (A[[x]])$
as pointed formal schemes. 
\end{definition} 

When $R = MU$ (as we will henceforth assume),
Quillen's theorem states that there is an equivalence   
between the stack that
one builds from the evenly graded Hopf algebroid $(MU_*, MU_* MU)$ and the
moduli stack of formal groups.

We
can often describe the quasi-coherent sheaves $\mathcal{F}_{\mathrm{even}}(X),
\mathcal{F}_{\mathrm{odd}}(X)$ on $M_{FG}$ in
terms of the geometry of formal groups. 

\begin{example}
The line bundle $\omega$ described above on $M_{FG}$ (that is, associated to
the $(MU_*, MU_*MU)$-comodule $MU_{* + 2}$, which arises topologically from
$S^{-2}$) assigns to a formal group $X$ the \emph{cotangent space}
$\mathcal{O}_{X}(-e)/\mathcal{O}_X(-2e)$ of functions on $X$ that vanish at
zero, modulo functions that vanish to order two (that is, the dual to the
\emph{Lie algebra}). 
\end{example}

\subsection{Stacks associated to ring spectra}
\label{sec:stackring}
We will also need a means of extracting stacks from ring spectra which may not
be as well-behaved as $MU$ (cf. \cite[Ch. 7]{TMF}).  
Suppose $X$ is a homotopy commutative ring spectrum. Then the above 
cosimplicial diagram
\[ X \wedge MU \rightrightarrows X \wedge MU \wedge MU \triplearrows \dots  \]
is a diagram of ring spectra itself, and the associated
diagram of homotopy groups is a diagram of commutative rings if $MU_* X$ is
evenly graded. Consequently, the associated sheaf $\mathcal{F}(X) =
\mathcal{F}_{\mathrm{even}}(X)$ on the stack $M_{FG}$
is a sheaf of commutative rings, and can be used to present another stack,
affine over $M_{FG}$.

\begin{definition} 
We write $\st(X)$ for the stack built in the above manner. 
Equivalently, $\st(X)$ is the stack associated to the Hopf algebroid $(MU_* X,
MU_*(MU \wedge X))$.
\end{definition} 

\begin{example} \label{complexorstack}
Consider $X = MU$. Recall that $\spec (\pi_*( MU \wedge MU))$ classifies a pair of formal group laws
with a  strict isomorphism between them. The scheme $\spec (\pi_* (MU \wedge MU
\wedge MU))$
corresponds to a triple of formal group laws with strict isomorphisms between them. It follows that the
associated stack is equivalent to the $\mathbb{G}_m$-quotient of
$\spec (L)$, where $L$ is the Lazard ring. 
More generally, whenever $R$ is a complex-orientable ring spectrum with
$\pi_* R$ evenly graded, the stack
$\st(R)$ is the $\mathbb{G}_m$-quotient of $\spec (R_*)$. 
It follows in particular that for such $R$, the sheaf
\( \mathcal{F}(R)   \) is the push-forward of the structure sheaf under the map
$\spec( R_*)/\mathbb{G}_m \to M_{FG}$. 
\end{example} 

\subsection{Even periodic ring spectra}

Let $X$ be a spectrum. As we saw,  $X$ defines  quasi-coherent sheaves
$\mathcal{F}_{\mathrm{odd}}(X), \mathcal{F}_{\mathrm{even}}(X)$ on the moduli  stack
$M_{FG}$ of formal groups. These come from $MU_*(X)$ together with the comodule
structure over the Hopf algebroid $(MU_*, MU_* MU)$ and the grading.
Equivalently, we can consider the periodic complex bordism spectrum $MP =
\bigvee_{i \in \mathbb{Z}}
\Sigma^{2i} MU$. 
In this case, we can write $MU_*(X) = MP_0(X) \oplus MP_1(X)$ and the even and
odd parts  of the grading correspond to the two summands. 

On the flat site of $M_{FG}$, there is a classical topological interpretation of
these sheaves.

\begin{definition}[\cite{AHS}] 
A homotopy commutative ring spectra $E$ is \emph{even periodic} if $\pi_i E =
0$ for $i$ odd, and if $\pi_2 E$ is an invertible module over $\pi_0 E$ with
the property that the multiplication map 
\( \pi_2 E \otimes_{\pi_0 E} \pi_{-2} E \to  \pi_0 E,  \)
is an isomorphism. \end{definition} 

Given an even periodic ring spectrum $E$, the formal scheme $\mathrm{Spf}
E^0(\mathbb{CP}^\infty)$ is a formal group over $E_0$ (see \cite{AHS}), and is
classified by a morphism $q\colon \spec (E_0) \to M_{FG}$. 
If $q\colon \spec (E_0 ) \to M_{FG}$ is flat, then $E$ is called a
\emph{Landweber-exact theory} and one has functorial isomorphisms:
\[  q^* (  \omega^j \otimes  \mathcal{F}_{\mathrm{even}}(X))  \simeq
E_{2j}(X) , \quad q^* ( \omega^j \otimes \mathcal{F}_{\mathrm{odd}}(X)) \simeq
E_{2j+1}(X). \]
In particular,  for a Landweber-exact theory, $E_*(X)$ can be recovered from
the sheaves $\mathcal{F}_{\mathrm{even}}(X), \mathcal{F}_{\mathrm{odd}}(X)$.
The stack associated to the ring spectrum $E$ is precisely $\spec (E_0)$. 

Conversely, given a ring $R$ and a flat morphism $q \colon \spec (R) \to M_{FG}$,
the functor
\[ X \mapsto  \bigoplus_j q^*\left( \omega^j \otimes
( \mathcal{F}_{\mathrm{even}}(X) \oplus
\mathcal{F}_{\mathrm{odd}}(X)) \right) ,  \]
defines a multiplicative homology theory, representable by a Landweber-exact
ring spectrum. 
The result is a \emph{presheaf of multiplicative homology theories} on the
flat site of $M_{FG}$. 
One reason this point of view is so useful is the following 
criterion for
flatness over $M_{FG}$ given by the {Landweber exact functor theorem}. 
For a discussion of the Landweber exact functor theorem and its interpretation
via the language of stacks (due to Hopkins), we refer to  
\cite{Sheaves} and \cite[Lecture 16]{chromatic}. 

\subsection{Topological modular forms}
Although $M_{FG}$ is a very large stack, there are smaller stacks that can be
used to approximate it. 
\begin{definition}
Let $\mell$ be the moduli  stack of generalized elliptic curves. 
In other words, $\mell$ assigns to each commutative ring $R$ the groupoid of all pairs
 $(\pi \colon C
\to \spec (R), e\colon \spec (R) \to C)$, where $\pi, e$ are such that: 
\begin{enumerate}
\item 
$\pi$ is a proper, flat morphism of finite presentation. 
\item  The 
fibers of $\pi$ have arithmetic genus one, and they are either smooth curves or curves
with a single nodal singularity. 
\item  $e$ is a section of $\pi$ whose
image is contained in the smooth locus of $\pi$. 
\end{enumerate}
\end{definition}

There is a flat map  $\mell \to M_{FG}$ which sends each generalized elliptic curve  to its formal completion at the identity, which acquires the structure
of a formal group. Consequently, any spectrum $X$ defines by pullback 
quasi-coherent sheaves on $\mell$ as well; we will denote these too by
$\mathcal{F}_{\mathrm{even}}(X),
\mathcal{F}_{\mathrm{odd}}(X)$. 

As before, the quasi-coherent sheaf $\mathcal{F}(X)$ on $\mell$ has a
topological interpretation. Given a flat map $q\colon \spec (R) \to \mell$
classifying an elliptic curve $C \to \spec (R)$, one can
construct an \emph{elliptic spectrum} $E$: in other words,   $E$ is even periodic with $E_0 = R$, and there is an
isomorphism of formal groups
\[ \mathrm{Spf} E^0(\mathbb{CP}^\infty) \simeq \hat{C},  \]
between the formal group of $E$ and the formal completion of $C$. 
In this case, one has $E_0( X)  \simeq q^*(\mathcal{F}_{\mathrm{even}}(X))$ and 
$E_1(X) \simeq q^* ( \mathcal{F}_{\mathrm{odd}}(X))$.
Moreover, there is  a functorial isomorphism
\[ E_{2j}(\ast) \simeq \omega^j, \quad E_{2j+1}(\ast) = 0,   \]
where $\omega$ is the line bundle on $\mell$ obtained by pulling back
$\omega$ on $M_{FG}$. 

The work of Goerss, Hopkins, Miller, and Lurie shows that, when we restrict to
\emph{\'etale} affines over $\mell$, $E$ actually can be
taken to be an $E_\infty$-ring spectrum, and that the above construction
is  can be made to be functorial: it defines a sheaf $\otop$ of $E_\infty$-ring spectra on the \'etale site of $\mell$. 
We refer to 

\begin{theorem}[Goerss, Hopkins, Miller, Lurie]
\label{blackbox}

\begin{enumerate}
\item (Existence)
There is a sheaf  $\otop$ of $E_\infty$-ring spectra on the \'etale site
of $\mell$, such that for
$\spec (R) \to \mell$ an affine \'etale open classifying an elliptic curve $C
\to \spec (R)$, the $E_\infty$-ring $\otop(\spec (R))$ is an elliptic
spectrum corresponding  to $C$. 
\item (Gap theorem)
Moreover,
$\pi_j( \Gamma(\mell, \otop)) = 0$ for $- 21 < j < 0$. 
\end{enumerate}
\end{theorem}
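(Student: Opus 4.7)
The plan is to break this into the two parts. For the existence in (1), the strategy is obstruction-theoretic. By the Landweber exact functor theorem applied to the flat map $\mell \to M_{FG}$, we already have a presheaf of multiplicative homology theories on the étale site of $\mell$; equivalently, a presheaf $\otop_{\mathrm{pre}}$ of homotopy commutative ring spectra. The task is to rigidify this presheaf to a sheaf of $E_\infty$-ring spectra. Two broadly equivalent routes are available: the Goerss-Hopkins obstruction theory, which shows that the successive obstructions to realizing an $E_\infty$-structure and to rigidifying it into a sheaf live in Andr\'e-Quillen cohomology groups of the formal group law, and these can be shown to vanish for elliptic spectra using the structure of $\mell$; and the spectral-algebro-geometric route of Lurie, which invokes a representability theorem stating that over a $p$-complete Noetherian base $R$, a (suitable) $p$-divisible group $\mathbb{G}$ determines a canonical even periodic $E_\infty$-ring whose underlying formal group is the formal part of $\mathbb{G}$. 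The Serre-Tate theorem identifies the deformation theory of an elliptic curve $C$ over a $p$-complete $R$ with that of its $p$-divisible group $C[p^\infty]$, so applying Lurie's theorem to $C[p^\infty]$ produces functorially an $E_\infty$-ring attached to $C$, at each prime $p$. One then assembles the $p$-complete and rational data via an arithmetic fracture square and verifies étale descent, obtaining the sheaf $\otop$.

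For the gap theorem in (2), I would use the descent spectral sequence afforded by the sheaf $\otop$:
\[ E_2^{s,t} = H^s\!\left(\mell,\, \omega^{\otimes t}\right) \Longrightarrow \pi_{2t-s}(\Tmf). \]
The $E_2$-term is the cohomology of line bundles on the compactified moduli stack of elliptic curves, which can be computed essentially by classical means (away from the primes $2$ and $3$, the stack has cohomological dimension one and the groups are just modular forms; at $2$ and $3$ the computation uses presentations by Weierstrass curves and is substantially more involved but finite and explicit). The range $-21 < j < 0$ for the vanishing of $\pi_j \Tmf$ corresponds to bidegrees $(s,t)$ with $s - 2t \in \{1, 2, \ldots, 20\}$; one checks directly that in this range the groups $H^s(\mell, \omega^{\otimes t})$ vanish (this is where the specific constant $21$ appears, coming from Serre duality and $24$-periodicity of the ring of modular forms). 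Finally, one verifies that no differential can contribute to this range by arguing that both the source and target of any potential differential already lie in vanishing bidegrees.

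The principal obstacle is, by a wide margin, the existence statement (1). Writing down the presheaf of spectra is easy via Landweber; lifting it to a sheaf of $E_\infty$-rings requires genuinely substantial input. Whichever route one chooses (Goerss-Hopkins moduli of $E_\infty$-structures, or Lurie's representability theorem together with Serre-Tate), one is invoking a deep general-purpose machine, and the verification that elliptic curves satisfy its hypotheses is nontrivial; in particular, étale descent has to be checked in the $E_\infty$ world, not just in homotopy-commutative ring spectra. The gap theorem, by contrast, reduces to a finite computation once the descent spectral sequence is in hand and one has access to $H^*(\mell,\omega^{\otimes \ast})$ at the primes $2$ and $3$; the main care needed there is the book-keeping of differentials near the vanishing edge.
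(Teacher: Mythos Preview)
The paper does not prove this theorem at all: it is explicitly labeled \texttt{blackbox} and is invoked as established input to the rest of the argument, with attribution to Goerss, Hopkins, Miller, and Lurie. So there is no ``paper's own proof'' to compare against; your proposal is a sketch of how the literature handles it, and that is the only reasonable thing to write here.

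That said, your account of the gap theorem contains a genuine error. You assert that in the range $-21<j<0$ the groups $H^s(\mell,\omega^{\otimes t})$ already vanish on the $E_2$-page, so that no differential analysis is needed. This is false at the primes $2$ and $3$. At those primes the $E_2$-page of the descent spectral sequence has plenty of classes in the relevant bidegrees (for instance, powers of $\eta$ and related torsion classes in arbitrarily high filtration); the gap only emerges \emph{after} running the differentials. The actual argument requires computing enough of the $d_3$, $d_5$, $d_7$, \dots\ pattern to see that everything in the strip $-21<2t-s<0$ is killed. Your final sentence, that ``both the source and target of any potential differential already lie in vanishing bidegrees,'' is therefore backwards: it is precisely the differentials that create the gap. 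Away from $2$ and $3$ your description is fine, since there the stack has cohomological dimension one and the vanishing is visible at $E_2$.

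Your sketch of part (1) is a fair high-level summary of the two known approaches, and you correctly identify it as the hard part.
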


We refer to \cite[Lec. 11--12]{TMF} for a treatment of the construction of the
sheaf $\otop$ and to \cite{computation} and \cite[Lecture 13]{TMF} for some of the
computations of the homotopy groups. See also the masters thesis of Johan Konter
\cite{konter} for the latter. 

They define
\[ \Tmf =   \Gamma(\mell, \otop) ,  \quad \quad \tmf = \tau_{\geq 0}(\Tmf),\]
where $\tau_{\geq 0}$ denotes the connective cover. 
In other words, $\Tmf$ is the $E_\infty$-ring spectrum of global sections of $\otop$, constructed as a
homotopy limit of the associated elliptic spectra as $\spec (R) \to \mell$ ranges over \'etale
morphisms. 
Since $\Tmf$ is constructed as a homotopy limit, we have a descent spectral
sequence
\[ H^i( \mell, \pi_{2j} \otop) 
 =  H^i(\mell, \omega^j)
\implies \pi_{2j-i} \Tmf.  \]
More generally, if $X$ is a spectrum, then  we get a spectral sequence
\begin{equation}\label{tmfss} H^i(\mell, \pi_j( \otop \wedge X)) \implies \pi_{j - i } (\Tmf
\wedge X).  \end{equation}
Here we use the fact that $\Tmf \wedge X \simeq \Gamma( \mell, \otop \wedge
X)$; the fact that this holds for any $X$, not necessarily finite, follows
because $\mell \to M_{FG}$ is tame (cf. \cite[Th. 4.14 and 7.2]{MMaffine}).
A knowledge of $\mathcal{F}_{\mathrm{even}}(X)$ and
$\mathcal{F}_{\mathrm{odd}}(X)$, or equivalently  of $\pi_0(\otop
\wedge X)$ and $\pi_1(\otop \wedge X)$, is the information necessary to
identify the $E_2$-page of the  
spectral sequence. 

The following case, which is that of interest to us, offers a simplification of
the spectral sequence: 
\begin{definition}[\cite{AHS}] A connective spectrum $X$ is \emph{even} if
$H_*(X; \mathbb{Z})$ is free and concentrated in even dimensions. We can make a
similar definition for a $p$-local spectrum, for a prime $p$. 
\end{definition} 

In the even case, the sheaf $\mathcal{F}_{\mathrm{even}}(X)$ on $M_{FG}$ can be interpreted in the
following way: for a flat morphism $q \colon \spec (R) \to M_{FG}$,  the $R$-module $q^*
\mathcal{F}_{\mathrm{even}}(X)$ is identified with $E_0(X)$ for $E$ the Landweber-exact,
even-periodic ring spectrum associated with $q$. 
It follows from the (degenerate) Atiyah-Hirzebruch spectral sequence that
$E_1(X) =0 $, and in particular $\mathcal{F}_{\mathrm{odd}}(X) =0$, 
$\mathcal{F}(X) = \mathcal{F}_{\mathrm{even}}(X)$.  We find that
$\mathcal{F}(X)$ can be simply viewed as a sheaf on $M_{FG}$ or $\mell$. 
In this case, the Adams-Novikov spectral sequence can be written as 
\[ H^i(M_{FG}, \mathcal{F}(X) \otimes \omega^j) \implies \pi_{2j-i}(X) , \]
and the descent spectral sequence can be written as
\begin{equation} \label{dsseven} H^i(\mell, \omega^j \otimes \mathcal{F}(X)) \implies \pi_{2j-i}( \Tmf
\wedge X).   \end{equation}

\section{A vector bundle on $M_{cub}$}

In this section, we will introduce the moduli stack $M_{cub}$ of cubic curves
and exhibit an eight-dimensional vector bundle at the prime 2. 

\subsection{The stack $M_{cub}$}
For our purposes, it will be convenient to work over the larger stack
of all cubic curves.  The results on cubic
curves that we need can be found in 
\cite{formulaire}. 

\begin{definition}
Given a scheme $S$, a \emph{cubic curve} over $S$ is a map $p\colon E \to S$ which is flat and proper of finite
presentation, together with a
section $e\colon S \to E$ whose image is contained in the smooth locus of $p$. The
geometric fibers
 of $p$ are required to be 
reduced, irreducible curves of arithmetic genus one.
We denote by $M_{cub}$ the stack which assigns to each commutative ring $R$ the
groupoid of cubic curves over $\spec (R)$. 
Below, we will recall a Hopf algebroid presentation of $M_{cub}$. 
\end{definition}

\begin{definition}
There is a line bundle $\omega$ on the stack $M_{cub}$, which assigns to a cubic 
curve $p\colon C \to \spec (R), e\colon \spec (R) \to C$ the $R$-module of sections of
$\mathcal{O}_C(-e)/\mathcal{O}_C(-2e)$: that is, the cotangent space along the
zero section $e$, or the dual to the Lie algebra. \end{definition}

If $p\colon E \to S$ is a cubic curve, there are three
possibilities for each geometric fiber: it can be an elliptic curve, a nodal
cubic in $\mathbb{P}^2$, or a
cuspidal cubic in $\mathbb{P}^2$ (isomorphic to the projective closure of the
curve defined by $y^2 = x^3$).
In the first two cases, there are no infinitesimal
automorphisms. However, the multiplicative group $\mathbb{G}_m$ acts on the
cuspidal curve. In particular, $M_{cub}$ is only an Artin stack,
which contains the Deligne-Mumford stack $\mell$ as an open substack. 
The complement of $\mell$ in $M_{cub}$ is given by the vanishing locus of the
{modular forms} $c_4 \in H^0(M_{cub}, \omega^4)$ and $\Delta \in H^0(M_{cub},
\omega^{12})$ (cf. \cite{formulaire} for expressions for these). 

Zariski locally on $S$,  a cubic curve 
can be described as a subscheme of $\mathbb{P}^2_S$ cut out by a cubic equation
\begin{equation} \label{cubic} y^2 +
a_1 xy + a_3 y = x^3 + a_2 x^2 + a_4 x + a_6. \end{equation} In order to
show this, one has to
choose coordinates $x,y$ (which are chosen as sections of  appropriate line
bundles). Consequently, one can write down an explicit presentation of this
stack via a Hopf algebroid. We sketch this below. 

Let $E \to \spec (R)$ be a cubic curve, given by a cubic equation \eqref{cubic}. 
Given one choice of $x,y$, the collection of other choices of coordinates
is parametrized by \begin{gather}\label{change} x = u^2 x' + r \\
y = u^3 y' + su^2 x' + t
\end{gather}
where $u \in R^{\ast}$ and $r,s,t \in R$. These are the isomorphisms between
Weierstrass curves. 

In particular, we can represent $M_{cub}$ as a Hopf algebroid (the
\emph{Weierstrass Hopf algebroid}) over the ring
$\mathbb{Z}[a_1, a_2, a_3, a_4, a_6]$. 
The left and right units come from the transformation laws of a cubic equation,
and the comultiplication comes from composition of isomorphisms. 

Suppose given a cubic curve \eqref{cubic}, and suppose one makes a change of
coordinates as in \eqref{change}. Then the new Weierstrass cubic, in
coordinates $x', y'$, has the form
\[   y'^2 +
a'_1 x'y' + a_3' y '= x'^3 + a_2' x'^2 + a'_4 x' + a'_6, \]
where:
\begin{gather} 
ua_1' = a_1 + 2s \\
u^2 a_2' = a_2 - sa_1 + 3r - s^2 \\
u^3 a_3' = a_3 + ra_1 + 2t \\
u^4 a_4' = a_4 - sa_3 + 2a_2 r - (t + rs)a_1 + 3r^2 - 2st \\
u^6 a_6' = a_6 + ra_4 + r^2 a_2 + r^3 - ta_3 - t^2 - rta_1 
\end{gather} 
The stack $M_{cub}$ is presented either by the Hopf algebroid $$(\mathbb{Z}[a_1,
\dots, a_6], \mathbb{Z}[a_1, \dots, a_6][u^{\pm 1}, r,  s, t]),$$ or by the
\emph{graded} Hopf algebroid $$(\mathbb{Z}[a_1,
\dots, a_6], \mathbb{Z}[a_1, \dots, a_6][r, s, t]), \quad \quad |a_i| = 2i  \text{
and } 
|r| =  4, |s| = 2, |t| = 6.$$ The grading is doubled as in topology.  

Given a cubic curve $E \to
\spec (R)$, there is induced a structure of commutative group scheme on the smooth
locus $E^{\circ} \to \spec (R)$ (see Proposition 2.7 of \cite{DR}). 
In fact, $E^{\circ}$ can be described as the \emph{relative Picard scheme}
$\mathrm{Pic}^0_{E/\spec (R)}$; see Theorem 2.6 of \cite{DR}. 
This is also described in Proposition 2.5 of III in \cite{silverman}.
In particular, we can take the formal completion at the zero section and  get a \emph{formal group} over $R$. This gives a morphism of stacks
\[ M_{cub} \to M_{FG}.  \]

\begin{example} 
The geometric points of $M_{cub}$ fall into four types in characteristic
$p>0$. There are the \emph{ordinary} elliptic curves, which map to 
height one formal groups, as do the \emph{nodal} elliptic curves. There are
\emph{supersingular} elliptic curves, which map to height two formal groups.
Finally, there is the \emph{cuspidal cubic}, which maps to the additive formal group
(of infinite height). 
\end{example} 

Consequently, given a spectrum $X$, defining sheaves
$\mathcal{F}_{\mathrm{even}}(X), \mathcal{F}_{\mathrm{odd}}(X)$ on
$M_{FG}$, we can pull back to define a sheaf (which we will still denote by
the same notation) on $M_{cub}$.  If $X$ is
an \emph{even} spectrum, then $\mathcal{F}(X) = \mathcal{F}_{\mathrm{even}}(X)$ will be a vector bundle
on $M_{cub}$ too. 
Note, however, that the map $M_{cub} \to M_{FG}$ is no longer flat, unlike the
map $\mell \to M_{FG}$. 

We note that the line bundle $\omega$ on $M_{FG}$ pulls back to the line bundle
$\omega$ on $M_{cub}$; given an elliptic curve $p \colon C \to \spec (R), e
\colon \spec (R)
\to C$, the $R$-module $\mathcal{O}_C(-e)/\mathcal{O}_C(-2e)$ is also the
\emph{Lie algebra} of the formal group. In particular, it can also be described
as associated to the graded comodule $\mathbb{Z}[a_1, \dots , a_6]$ over the
Weierstrass Hopf algebroid with the
grading shifted by $2$. 

\subsection{An eight-fold cover of $M_{cub}$}
Henceforth, in this section, we work localized at 2 throughout: in particular, stacks such as
$M_{cub}$ will really mean $M_{cub} \times_{\spec (\mathbb{Z})} \spec
(\mathbb{Z}_{(2)})$. 
We will exhibit an eight-dimensional vector bundle on $M_{cub}$ (which we will
later see corresponds to a finite spectrum) by producing a finite flat cover
$p\colon T \to M_{cub}$, for $T$ a simpler stack. The associated vector bundle will
be $p_*\mathcal{O}_{T}$. We refer to Section~\ref{levestructureinterpretation} 
below for a discussion of the modular interpretation. 

Namely, we take for $T$ the stack-theoretic quotient
\[ T = \spec (\mathbb{Z}_{(2)}[\alpha_1, \alpha_3]) / \mathbb{G}_m,  \]
where the $\mathbb{G}_m$-action corresponds to the grading of the ring
$\mathbb{Z}_{(2)}[\alpha_1, \alpha_3]$ with $|\alpha_1 | = 2, |\alpha_3| =6$,
where the grading is \emph{doubled} in accordance with topology. 
In other words, we can think of $T$ as the stack associated to the prestack  sending a ring $R$ to the groupoid of pairs of elements
$(\alpha_1, \alpha_3) \in R$, with 
\[ \hom ( (\alpha_1, \alpha_3), (\alpha_1', \alpha_3')) = \left\{u \in R^*
 :  u \alpha_1 = \alpha_1', u^3 \alpha_3 = \alpha_3'\right\} . \]

To produce the cover $p\colon T \to M_{cub}$, observe first that there is a map $\spec
(\mathbb{Z}_{(2)}[\alpha_1, \alpha_3 ]) \to M_{cub}$ classifying the cubic
curve
\[ y^2 + \alpha_1 xy + \alpha_3 y = x^3.  \]
Observe that $\mathbb{G}_m$ also acts on the cubic curve in a corresponding
fashion as in the action on $\mathbb{Z}_{(2)}[\alpha_1, \alpha_3]$. 
Namely, given an invertible element $u$, we have the transformation
\( x \mapsto u^2x,  \ y \mapsto  u^3 y  \)
from the curve $y^2 + \alpha_1 xy + \alpha_3y = x^3$ into the curve $y^2 + u
\alpha_1 xy +
u^3 \alpha_3 y = x^3$. 
These isomorphisms allow us to
produce the morphism of stacks $T \to
M_{cub}$, as desired. 

\begin{proposition} \label{cover}The map $p\colon T \to M_{cub}$ is
representable and is
a finite, flat cover of rank eight. 
\end{proposition}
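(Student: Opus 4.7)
The plan is to base change $p$ along the faithfully flat Weierstrass cover $W = (\spec A)/\mathbb{G}_m \to M_{cub}$ with $A = \mathbb{Z}_{(2)}[a_1, \dots, a_6]$; since ``finite flat of rank $8$'' descends along fppf maps, it suffices to verify the pulled-back morphism is finite flat of rank $8$ over $W$. Using the Weierstrass Hopf algebroid $\Gamma = A[r, s, t]$, the fiber product $T \times_{M_{cub}} W$ is the $\mathbb{G}_m$-quotient of $\spec \mathcal{R}$, where
\[
\mathcal{R} = A[r, s, t] / (E_2, E_4, E_6),
\]
and the three relations, coming from imposing $a_2' = a_4' = a_6' = 0$ in the transformation formulas for a Weierstrass cubic, are
\begin{gather*}
E_2 = a_2 - sa_1 + 3r - s^2, \\
E_4 = a_4 - sa_3 + 2 a_2 r - (t+rs) a_1 + 3 r^2 - 2 st, \\
E_6 = a_6 + r a_4 + r^2 a_2 + r^3 - t a_3 - t^2 - r t a_1,
\end{gather*}
of degrees $4, 8, 12$ respectively. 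Thus the task reduces to showing $\mathcal{R}$ is free of rank $8$ as a graded $A$-module.

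I would exhibit the explicit graded basis $B = \{1, s, s^2, s^3, t, st, s^2 t, s^3 t\}$, whose elements lie in degrees $0, 2, 4, 6, 6, 8, 10, 12$. Spanning follows from a triangular elimination: since $3$ is a unit in $\mathbb{Z}_{(2)}$, the relation $E_2 = 0$ yields $r = (s^2 + s a_1 - a_2)/3$, eliminating $r$. After this substitution, $E_4$ has leading $s$-term $\tfrac{1}{3} s^4$ and is linear in $t$ (with coefficient $-(a_1 + 2s)$), so $E_4 = 0$ writes $s^4$ as an $A$-linear combination of $\{1, s, s^2, s^3, t, st\}$; likewise $E_6$ has leading $t$-term $-t^2$, so $E_6 = 0$ writes $t^2$ as an $A$-linear combination of elements of $B$. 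Iterating these two reductions, every $A$-linear combination of monomials in $s, t$ is reducible to a combination of $B$, so the graded $A$-linear map $\phi \colon A^{\oplus 8} \to \mathcal{R}$ sending generators to $B$ is surjective.

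Surjectivity of $\phi$ implies $\mathcal{R}$ is module-finite over $A$, whence $\dim \mathcal{R} \leq \dim A = 6$. Krull's height theorem in the $9$-dimensional regular ring $A[r,s,t]$ gives $\dim \mathcal{R} \geq 9 - 3 = 6$, so in fact $\dim \mathcal{R} = 6$; since $A[r,s,t]$ is Cohen--Macaulay, this forces $(E_2, E_4, E_6)$ to be a regular sequence. The Koszul complex then gives a graded free resolution of $\mathcal{R}$, producing the Hilbert series
\[
\mathrm{HS}(\mathcal{R}) = \mathrm{HS}(A) \cdot \frac{(1-x^4)(1-x^8)(1-x^{12})}{(1-x^2)(1-x^4)(1-x^6)} = \mathrm{HS}(A) \cdot (1 + x^2 + x^4 + 2x^6 + x^8 + x^{10} + x^{12}),
\]
which matches the Hilbert series of the graded free $A$-module generated by $B$; degree-wise comparison then forces the graded surjection $\phi$ to be an isomorphism, proving freeness of rank $8$ and hence the proposition. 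The hardest step is the triangular reduction that simultaneously establishes the spanning of $B$ and the regularity of the sequence; this is clean precisely because invertibility of $3$ at the prime $2$ decouples the three leading terms $3r$, $\tfrac{1}{3}s^4$, and $-t^2$.
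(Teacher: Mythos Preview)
Your overall strategy is sound and your regularity/Hilbert-series argument for freeness is correct, but there is one imprecision in the spanning step, and the route differs from the paper's.

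The imprecision: you assert that ``$E_6 = 0$ writes $t^2$ as an $A$-linear combination of elements of $B$,'' but after substituting $r = (s^2 + s a_1 - a_2)/3$ the relation $E_6$ still carries the $r^3$-term, which contributes monomials $s^4, s^5, s^6$ with unit coefficients; so $t^2$ is only expressed in terms of $\{1, s, \dots, s^6, t, st, s^2 t\}$, not $B$. Your phrase ``iterating these two reductions'' is the intended fix, but termination is not justified, and no single monomial order has both $s^4$ and $t^2$ as leading terms (the $s^4$-reduction can raise $t$-degree while the $t^2$-reduction raises $s$-degree). The clean repair is graded Nakayama over the connected graded ring $A$: it suffices to check that the images of $B$ span $\mathcal{R}\otimes_A \mathbb{Z}_{(2)}$, i.e.\ after setting all $a_i = 0$. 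There the relations become $s^4 = 6st$ and $s^6 = 27 t^2$, from which one quickly derives $t^2 = \tfrac{2}{9}s^3 t$ and $s t^2 = 0$, so $B$ visibly spans. With spanning in hand, your dimension count, Cohen--Macaulay deduction of regularity, and Koszul Hilbert-series comparison go through and yield freeness of rank eight; faithful flatness (the ``cover'' part) then follows automatically since the rank is positive.

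For comparison, the paper argues differently. It establishes finiteness precisely by the Nakayama reduction just described (in fact reducing further to the cuspidal cubic over $\mathbb{Z}/2$, where the relations become $s^4 = t^2 = 0$). It then proves flatness not via an explicit free basis over $A$ but by a general lemma: a closed subscheme of $\mathbb{A}^n_R$ cut out by $n$ equations with zero-dimensional fibers is flat over $R$. Surjectivity is handled separately by noting that a finite flat map has open-and-closed image and $M_{cub}$ is connected. Your approach has the advantage of producing the explicit free $A$-basis $B$ in one stroke; the paper's approach is more modular and foregrounds the role of the cuspidal point, a theme it exploits again when identifying $\mathcal{F}(DA(1))$.
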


Proposition~\ref{cover} will be proved in the next subsection. 
\subsection{Verification of the rank eight cover}
The goal of this subsection is to establish Proposition~\ref{cover}. 
To check that a morphism $\mathfrak{X} \to \mathfrak{Y}$ of stacks is finite and flat, we just need
to show that for every map $\spec (R) \to \mathfrak{Y}$, the pullback $\spec
(R)
\times_\mathfrak{Y} \mathfrak{X}$
is a finite flat cover of $\spec
(R)$. 
In our case, we need to show that for every morphism $\spec (R) \to M_{cub}$,
when one forms the pullback square,
\[ \xymatrix{
P \ar[d] \ar[r] & T\ar[d]  \\
 \spec (R) \ar[r] &  M_{cub}
},\]
the fiber product $P$ is an affine scheme, corresponding to the $\spec$ of an algebra which is a
finite, flat $R$-module of rank eight. 

Let us first identify the pullback $\spec( R) \times_{M_{cub}} \spec
(\mathbb{Z}_{(2)}[\alpha_1, \alpha_3])$ concretely. Suppose that $\spec (R) \to M_{cub}$
 classifies a
cubic curve in $\mathbb{P}^2_R$ cut out by
the Weierstrass equation
$$y^2 + a_1 xy + a_3 y = x^3 + a_2x + a_4 + a_6, \quad a_i \in R.$$ 
By passing to a Zariski cover of $\spec (R)$, we can always assume this. 
Then, to form the pullback of stacks, we have to consider the scheme parametrizing
changes of coordinates (over $R$) which will transform this equation into an
equation 
\[   y'^2 +
a'_1 x'y' + a_3' y '= x'^3 + a_2' x'^2 + a'_4 x' + a'_6, \]
where $a_2', a'_4, a'_6 = 0$. In other words, looking back at the previous
formulas, $\spec (R) \times_{M_{cub}} \spec (\mathbb{Z}_{(2)}[\alpha_1,
\alpha_3])$
is the scheme parametrizing elements $u, r, s, t$ (with $u$ invertible)
satisfying the equations 
\begin{gather}  \label{threerelns}
0 = a_2 - sa_1 + 3r - s^2  \\
0 = a_4 - sa_3 + 2a_2 r - (t + rs)a_1 + 3r^2 - 2st \label{threerelns2} \\
0 = a_6 + ra_4 + r^2 a_2  + r^3 - ta_3 - t^2 - rta_1 \label{threerelns3}
\end{gather} 
In other words,  it is $R[u^{\pm 1}, r, s, t]$ modulo the above relations.
Taking the pullback $P = \spec (R) \times_{M_{cub}} T$ corresponds to taking the
$\mathbb{G}_m$-quotient: in other words, we just have to ignore $u$. 

We find that $P$ is a closed subscheme of affine space $\mathbb{A}^3_{R}$ cut
out by the three equations above;
by the first relation, $r$ is determined in terms of $s$, and $P$ is even a
closed subscheme of $\mathbb{A}^2_R$ cut out by two equations. 
Our goal is to show that it is finite flat over $\spec (R)$, of rank eight. 
Let us first show finiteness. 
\begin{lemma} 
Let $R$ be a ring, and let $a_1, a_2, a_3, a_4, a_6 \in R$. Then the quotient
of $R[r, s, t]$ by the above three relations \eqref{threerelns},
\eqref{threerelns2}, \eqref{threerelns3} is  a finite
$R$-module. 
\end{lemma}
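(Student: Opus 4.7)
The plan is to use the three Weierstrass change-of-coordinate relations to successively eliminate $r$, then high powers of $s$, and then high powers of $t$. Since we are working $2$-locally throughout this section, the integer $3$ is a unit in $R$, and this is the only arithmetic input we will use.

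First, rewrite the first relation \eqref{threerelns} as
\[ 3r = s^2 + sa_1 - a_2, \]
so that, after inverting $3$, the variable $r$ is expressed as a polynomial in $s$ (with coefficients in $R$). Substituting this expression for $r$ into \eqref{threerelns2} and \eqref{threerelns3} reduces the problem to showing that
\[ R[s,t]/(f_2, f_3) \]
is finite over $R$, where $f_2, f_3$ are the images of the second and third relations.

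Now I would track the leading behavior. Relation \eqref{threerelns2} has the summand $3r^2$, which after substituting $r = (s^2 + sa_1 - a_2)/3$ contributes $s^4/3$ as the top-degree term in $s$; all other summands are of degree at most $3$ in $s$ (the term $-2st$ and $-a_1 t$ contain $t$ but only in degree one). Multiplying $f_2$ by the unit $3$ yields a relation of the form
\[ s^4 + g_3(s) t + h(s) = 0, \]
with $g_3, h \in R[s]$ of $s$-degree at most $3$; this lets us replace $s^4$ (and hence $s^i$ for $i \geq 4$) by $R$-linear combinations of $1, s, s^2, s^3, t, st, s^2 t, s^3 t$ and lower-degree terms in $t$. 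Similarly, relation \eqref{threerelns3} contains the term $-t^2$, and the coefficient $-1$ is obviously a unit, so modulo $f_3$ we may rewrite $t^2$ as an element of $R[s] + R[s]\cdot t$, after which the previous step shreds any factor of $s^4$ that appears.

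Iterating these two reductions shows that the quotient $R[r,s,t]/(\text{three relations})$ is generated as an $R$-module by the eight monomials $\{ s^i t^j : 0 \le i \le 3, \ 0 \le j \le 1\}$, and hence is finite over $R$. The main obstacle is not conceptual but purely bookkeeping: one must check that the rewrite rules for $s^4$ and $t^2$ terminate, which follows from a straightforward induction on the lexicographic order on $(j,i)$ given that each rewrite strictly decreases this pair. (The stronger claim that the quotient is \emph{free} of rank $8$---which is what we ultimately need for \thref{cover}---will require additionally verifying that these eight monomials are $R$-linearly independent; this can be done either by a direct Gr\"obner-style argument or, more conceptually, by specializing to the universal case $R = \mathbb{Z}_{(2)}[a_1,\dots,a_6]$ and comparing Hilbert series.)
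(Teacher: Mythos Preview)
Your overall plan---eliminate $r$ using the first relation (since $3$ is a unit), then argue that the eight monomials $s^i t^j$ with $0\le i\le 3$, $0\le j\le 1$ generate---is correct and is exactly the set of generators the paper arrives at. However, the termination argument you give is flawed. The rewrite $s^4 \mapsto P(s) + Q(s)t$ (with $\deg_s Q\le 1$) applied to $s^i t^j$ produces a term $s^{i-3}t^{j+1}$, which has \emph{larger} first coordinate in the lexicographic order on $(j,i)$; so this pair does \emph{not} strictly decrease. In fact no monomial order makes both rewrites strictly decrease: with the natural weights $|s|=1$, $|t|=3$ the top-degree parts of your two relations are $s^4-6st$ and $t^2-s^6/27$, and one checks that requiring $s^4>st$ forces $s^3>t$ while requiring $t^2>s^6$ forces $t>s^3$. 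The rewrites genuinely loop (e.g.\ $s^7\to 6s^4t\to 36st^2\to \tfrac{4}{3}s^7+\cdots$), and what actually happens is that the loop returns a unit multiple of the original monomial, so one can solve; but this requires tracking coefficients and is not ``straightforward bookkeeping.''

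The paper avoids this entirely by a Nakayama argument, which is both shorter and more conceptual. One first passes to the universal case $R=\mathbb{Z}_{(2)}[a_1,\dots,a_6]$, where everything is graded (with $|r|=4$, $|s|=2$, $|t|=6$); graded Nakayama then reduces finiteness to the case $a_i=0$, i.e.\ the cuspidal cubic over $\mathbb{Z}_{(2)}$. A second application of Nakayama reduces to $\mathbb{Z}/2$, where the relations collapse to $s^4=0$ and $t^2=0$ (since $6st\equiv 0$ and $s^6=s^2\cdot s^4=0$), and finiteness is immediate. This two-step reduction to the ``most degenerate'' point is a recurring theme in the paper (cf.\ \thref{cuspidalblackhole}), and here it replaces a delicate Gr\"obner computation with a one-line check.
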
 
\begin{proof}
We can do this in the universal case where $R = \mathbb{Z}_{(2)}[a_1, \dots,
a_6]$. Let $I$ be the ideal generated by the above relations. 
In this case, observe that $R$ is naturally a \emph{graded} ring (where we
set $|a_i|
= 2i$), and the ring
$R[r,s,t]/I$ is a graded ring too, if we set
\[ |r| = 4, \quad |s| = 2, \quad |t| = 6.  \]
In other words, the relations defining $I$ are homogeneous. 
In view of Nakayama's lemma, we can now prove finiteness by taking the
quotient by the
augmentation ideal: that is, by  working over $\mathbb{Z}_{(2)}$, and by setting
all the $a_i = 0$ (so that we have the cuspidal cubic $y^2 = x^3$, and the
grading of $R[r, s, t]/I$ comes from this grading). Then the
relevant ring is 
the quotient of $\mathbb{Z}_{(2)}[r, s, t]$ by the relations
\begin{gather} 3r - s^2 = 0  , \quad 3r^2 - 2st = 0 , \quad r^3 - t^2  = 0 . 
\end{gather}
Alternatively, this is the quotient of $\mathbb{Z}_{(2)}[s, t]$ under the
relations
\[ 3( s^2/3)^2 =  2st , \quad (s^2/3)^3 = t^2  .\]
Since this is a graded $\mathbb{Z}_{(2)}$-module each of whose graded pieces is
finitely generated, we may as well prove finiteness after tensoring with
$\mathbb{Z}/2$, by the ungraded version of Nakayama's lemma. 
We then get the $\mathbb{Z}/2$-algebra $\mathbb{Z}/2[s, t]/(s^4, t^2)$, which
is evidently finite over $\mathbb{Z}/2$.
\end{proof}

If $R$ is any ring, $a_i \in R$ for $i = 1, 2, \dots, 6$, and we form the
quotient $R[r, s, t]/I$ as above, the above
proof also shows that there is  a set of eight generators of the $R$-module
$R[r, s, t]/I$, given by
\[ \left\{1, s, s^2, s^3, t, st, s^2 t, s^3 t\right\} . \]
Namely, we just need to prove this in the universal case
$\mathbb{Z}_{(2)}[a_1, a_2, a_3, a_4, a_6]$, which reduces by the
Nakayama-type argument above to the case of the cuspidal cubic over
$\mathbb{Z}/2$. We have seen 
that the above elements generate in that case and in fact form a basis. 

Now that we know that the map $T \to M_{cub}$ is finite, flatness is automatic.
In fact, we saw in the proof that for any map $\spec (R) \to M_{cub}$, the
pullback $T \times_{M_{cub}} \spec (R)$ was, up to Zariski localizing on $R$, of the form  $R[r,s,t]/I$, where $I$ was an ideal generated by three
elements.
This implies flatness by the following lemma. 

\begin{lemma} 
Let $X$ be a noetherian scheme, and let $Z \subset \mathbb{A}^n_X$ be a closed subscheme
locally cut out by $n$ equations. Suppose the fibers $Z_x, x \in X$ are
zero-dimensional. Then $Z \to X$ is flat. 
\end{lemma}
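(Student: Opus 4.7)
The plan is to establish flatness via the local criterion of flatness, reducing to a local noetherian base and inducting on $n$. Since flatness can be checked Zariski-locally on both source and target, I would first reduce to the case $X = \spec A$ for $A$ noetherian local with residue field $k$, and $Z = \spec B$ with $B = A[x_1,\ldots,x_n]/(f_1,\ldots,f_n)$; the reduction to noetherian hypotheses is routine because everything is finitely presented (standard limit arguments). To verify that $B$ is $A$-flat, it suffices to verify flatness of $B_\mathfrak{q}$ over $A$ for each prime $\mathfrak{q} \subset B$, and after localizing $A$ at the contraction of $\mathfrak{q}$ I may assume $\mathfrak{q}$ lies over the maximal ideal of $A$.

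The key algebraic input is that at each point $\overline{\mathfrak{q}}$ of the fiber, the sequence $\overline{f}_1,\ldots,\overline{f}_n$ is regular in the local ring $k[x_1,\ldots,x_n]_{\overline{\mathfrak{q}}}$. This follows from Krull's height theorem together with the Cohen--Macaulay property of regular local rings: the localized polynomial ring is Cohen--Macaulay of dimension at most $n$, the ideal $(\overline{f}_1,\ldots,\overline{f}_n)$ cuts out a zero-dimensional subscheme by hypothesis, so it has height exactly $n$; in a Cohen--Macaulay local ring, any ideal of height equal to the number of its generators is generated by a regular sequence.

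With fiberwise regularity in hand I would apply the slicing criterion for flatness repeatedly: if $(A,\mathfrak{m}) \to (R,\mathfrak{n})$ is a flat local homomorphism of noetherian local rings and $f \in \mathfrak{n}$ maps to a nonzerodivisor on $R/\mathfrak{m}R$, then $R/(f)$ is again $A$-flat and $f$ is a nonzerodivisor on $R$ (EGA IV, 11.3.7). Setting $R_0 = A[x_1,\ldots,x_n]_\mathfrak{q}$, which is flat over $A$ as a localization of a polynomial algebra, and $R_i = R_{i-1}/(f_i)$, the fiberwise regularity established above guarantees that $f_{i+1}$ acts as a nonzerodivisor on $R_i/\mathfrak{m}R_i$ at each stage; the criterion then inductively yields flatness of $R_{i+1}$ over $A$, and after $n$ iterations one obtains the desired flatness of $B_\mathfrak{q}$. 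The main obstacle is conceptual rather than computational: one must correctly identify fiberwise regularity as the hypothesis that feeds the induction and verify that it is preserved at each step, which is essentially the familiar content of ``miracle flatness'' for a local complete intersection of relative dimension zero.
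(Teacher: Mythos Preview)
Your proposal is correct and follows essentially the same route as the paper: reduce to a noetherian local base, localize at a point of $Z$ over the closed point, observe that the $f_i$ form a regular sequence on the special fiber because the fiber is zero-dimensional in a Cohen--Macaulay (in fact regular) local ring of dimension $n$, and then lift flatness via the slicing criterion. The only cosmetic difference is that you make the induction on $n$ explicit using EGA IV 11.3.7, whereas the paper invokes the packaged result (EGA IV$_3$, Proposition 15.1.16) that does all $n$ steps at once; the underlying argument is identical.
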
 
\begin{proof} 
In fact, we may suppose $X = \spec (R)$, for $(R, \mathfrak{m})$ a local noetherian ring. 
We need to show that the local rings $\mathcal{O}_{Z, z}$ are flat $R$-modules
for each $z \in Z$ lying over the maximal ideal of $R$. The ring
$\mathcal{O}_{Z, z}$ is obtained from a localization of $R[t_1, \dots, t_n]$ at a
maximal ideal $\mathfrak{M} \supset \mathfrak{m}$, by taking the quotient by
$n$ equations $f_1,  \dots, f_n$. 
Observe moreover that the local ring $S = R[t_1, \dots, t_n]_{\mathfrak{M}} $ is
flat over $R$, 
and $S/(f_1, \dots, f_n)$ is the localization of a finite $R$-module. 
Let $k$ be the residue field of $R$. Then $S \otimes_R k$
is an $n$-dimensional regular local ring and $(S \otimes_R k)/(f_1, \dots, f_n)$
is artinian, so $f_1, \dots, f_n$ form a regular sequence on $S \otimes_R
k$. It follows now from a Nakayama argument (cf. \cite[Prop 15.1.16]{EGA})  that $f_1, \dots,  f_n$ are regular on $S$
itself, and that $ \mathcal{O}_{Z, z} = S/(f_1, \dots, f_n)$ is a flat $R$-module. 
\end{proof}

Finally, let us show that the map $p\colon \spec (\mathbb{Z}_{(2)}[\alpha_1,
\alpha_3])/\mathbb{G}_m \to M_{cub}$ is a \emph{cover}, i.e., that it is surjective.  
Surjectivity follows because $p$ is finite flat, so that the image is both
open and closed; however, $M_{cub}$ admits a cover by $\spec
(\mathbb{Z}_{(2)}[a_1, a_2, a_3, a_4, a_6])$ and is thus connected. 

The upshot of all this is that the map 
\( p\colon T = \spec (\mathbb{Z}_{(2)}[\alpha_1, \alpha_3])/\mathbb{G}_m \to M_{cub}  \)
is an eight-fold flat cover, and consequently the push-forward of the structure
sheaf gives us an eight-dimensional vector bundle $\mathcal{V}$ on $M_{cub}$. Our goal is to
show that this vector bundle is realized by an 8-cell complex. 

\section{Calculation of $\Tmf_*(DA(1))$}

In this section, we recall the
2-local complex $DA(1)$ and show that the vector
bundle it induces on $\mell$ is the one constructed algebraically in the
previous section. 
We will do this by producing a map $\mathcal{F}(DA(1)) \to
\mathcal{F}(MU)$ and a map from $\mathcal{F}(MU)$ to the vector
bundle of the previous section, over $M_{cub}$. We will check that the
composite is an isomorphism over the cuspidal cubic over $\mathbb{Z}/2$. 
This analysis leads to the 
computation of $\Tmf_*(DA(1))$ by the descent spectral sequence. 
In the final subsection, we will discuss the analog at odd primes. 

\subsection{The complex $DA(1)$}
In this section, we describe a 2-local complex with eight-dimensional homology. 

We start by describing the question mark complex, a variant of
which is constructed in
\cite[Lem. 7.2]{minimalatlases}.
First, let $\nu\colon S^3 \to S^0$ and $\eta\colon S^1 \to S^0$ be the usual Hopf
maps. 
We have $\eta \nu = 0 $. 
We draw the cofiber sequence for $\eta$, which runs $S^1 \stackrel{\eta}{\to}
S^0 \to \Sigma^{-2} \mathbb{CP}^2 \to S^2 \to \dots$, and consider the diagram: 
\[
\xymatrix{
 & & & S^5 \ar[d]^{\Sigma^2 \nu} \ar@{-->}[ld]^t\\
S^1 \ar[r]^{\eta} &   S^0 \ar[r] &  \Sigma^{-2} \mathbb{CP}^2   \ar[r]
&   S^2 \ar[d] 
\ar[r]^{\Sigma \eta} &  S^1  \\
 & &   & \Sigma^{-2} \mathbb{HP}^2.
}
 \]
 The map $t$ drawn as a dotted arrow exists because $\eta \nu = 0$; it is
 even unique as $\pi_5(S^0) = 0$. 

 \begin{definition}
The \emph{question mark complex} $Q$ is defined to be the cofiber of $t$. 
\end{definition}

A simple calculation shows that $H^*(Q; \mathbb{Z}/2)$ is three-dimensional,
with a basis given by the element $x_0$ in degree zero, and $\mathrm{Sq}^2 x_0$
and $\mathrm{Sq}^4 \mathrm{Sq}^2 x_0$. 

\begin{definition} 
Let $\mathbb{D}Q = F(Q, S^0)$ be the Spanier-Whitehead dual to $Q$. 
We define the (2-local) complex $DA(1)$ to be the six-fold
suspension of the cofiber of the coevaluation map $S^0 \to Q \wedge \mathbb{D}Q$.
\end{definition} 

The composite of the coevaluation and evaluation maps
\( S^0 \to Q \wedge \mathbb{D}Q \to S^0  \)
is multiplication by  the Euler characteristic  $\chi(Q) = 3$. Since we are working 2-locally, we
find that there is  a splitting
\( Q \wedge \mathbb{D}Q \simeq S^0 \vee \Sigma^{-6}DA(1).  \)

Note that $DA(1)$ is an \emph{even} 2-local spectrum. 
A computation shows that the cohomology $H^*(DA(1); \mathbb{Z}/2)$ is a free module
over the eight-dimensional algebra $D\mathcal{A}(1) \subset
\mathcal{A}/\mathcal{A} \mathrm{Sq}^1 \mathcal{A}$ generated by
$\mathrm{Sq}^2, \mathrm{Sq}^4$; the cohomology is drawn in Figure~\ref{fig:DA1}. 
Its homology, as a comodule over $\A$, can be described as 
$\mathbb{Z}/2\left\{1, \xi_1^2, \xi_1^4, \xi_1^6\right\} \otimes
\mathbb{Z}/2\left\{1, \zeta_2^2\right\} \subset \A$.

{\centering
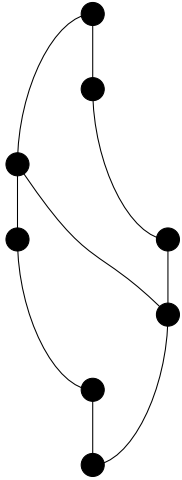
\begin{figure} {\small \begin{tikzpicture}
\tikzstyle{every node}=[draw, fill=black]
\draw  (0,0) -- (0, 1) node[circle]{}; 
\fill[fill=black] (0, 0) node[circle]{};
\draw (0, 0) .. controls (.5,0)  and (1,1) .. (1, 2) node[circle]{};
\draw (0, 1) .. controls (-.5,1)  and (-1,2) .. (-1, 3) node[circle]{};
\draw (1, 2) -- (1,3) node[circle]{};
\draw (-1, 3) -- (-1, 4) node[circle]{}; 
\draw (1, 2) .. controls (0,3) and (0, 2.5)  .. (-1, 4); 
\draw (1,3) .. controls (0.5, 3) and (0, 4) .. (0, 5) node[circle]{};
\draw (0, 5) -- (0, 6);
\draw (-1, 4) .. controls  (-1, 5) and (-0.5, 6) .. (0, 6) node[circle]{}; 
\end{tikzpicture}}
\caption{The cohomology of $DA(1)$. This depicts only the action of
$\mathrm{Sq}^2, \mathrm{Sq}^4$; there is also a nontrivial $\mathrm{Sq}^8$
which is not shown.}
\label{fig:DA1}
\end{figure}}

\begin{remark} 
The complex is so named because its cohomology is free over the subalgebra of
the Steenrod algebra generated by $\mathrm{Sq}^2$ and $\mathrm{Sq}^4$, which
doubles the subalgebra $\mathcal{A}(1) \subset \mathcal{A}$ generated by $\mathrm{Sq}^1,
\mathrm{Sq}^2$. 
\end{remark} 

Since $DA(1)$ is an even spectrum, the Atiyah-Hirzebruch spectral sequence
for $MU^*(DA(1))$ degenerates, and we can find a morphism
\( DA(1) \to MU  \)
which induces an isomorphism on $\pi_0$.  This map is not unique, but we have
specified its image in mod two homology after composing with the projection
$MU \to BP$. 
To see this, we need to first recall some notation.

\begin{conv}
We let $\A = \pi_*( H \mathbb{Z}/2 \wedge H\mathbb{Z}/2)$ denote the (mod 2) dual Steenrod algebra. Then we have
\[ \A = \mathbb{Z}/2 [\xi_1, \xi_2, \dots ], \quad |\xi_i| = 2^i - 1,  \]
using the standard notation (cf. \cite[Ch. 6]{MosherTangora} for a textbook
reference). 
We will let $\zeta_i$ denote the Hopf conjugate of $\xi_i$. 
\end{conv}

We recall that the map $BP \to H \mathbb{Z}/2$ induces an injection in mod 2
homology, and we get $H_*(BP; \mathbb{Z}/2) = \mathbb{Z}/2[\xi_i^2] =
\mathbb{Z}/2[\zeta_i^2]
\subset \A$. Compare \cite[Ch. 4]{ravenel} for a textbook reference. The image of $H_*(DA(1);
\mathbb{Z}/2)$ in $H_*(BP; \mathbb{Z}/2)$ is 
spanned by $\left\{\xi_1^a \zeta_2^b\right\}$ for $0 \leq a \leq 3$
and $0 \leq b \leq 1$.

Both $DA(1)$ and $MU$ define quasi-coherent sheaves on $M_{FG}$, and thus on $\mell$
and even $M_{cub}$; these are denoted $\mathcal{F}(DA(1))$ and $\mathcal{F}(MU)$.
The map $DA(1) \to MU$ defines an injection of sheaves
\[ \mathcal{F}(DA(1)) \to \mathcal{F}(MU).  \]
Our goal will be to produce a map $\mathcal{F}(MU) \to \mathcal{V}$, for
$\mathcal{V}$ the 
eight-dimensional vector bundle constructed in the previous section, such that
the composite is an isomorphism. We will construct the map, and  check that it is an
isomorphism on the cuspidal curve over
$\mathbb{Z}/2$. 

\subsection{Construction of a map}
Let us start by describing the sheaf 
$\mathcal{F}(MU)$ on the stack $M_{FG}$. This assigns to a formal group  over a ring $R$ the ring
parametrizing
coordinates on this formal group, modulo action of $\mathbb{G}_m$. 
Namely, let $M_{FG}^{coord}$ be the moduli stack of formal groups together
with a  coordinate (i.e., an isomorphism of formal schemes with the formal
affine line $\widehat{\mathbb{A}^1}$), so that $M_{FG}^{coord} $ is simply the
spectrum of the Lazard ring. 
It parametrizes formal group \emph{laws} (and no isomorphisms).

There is a
$\mathbb{G}_m$-action on $M_{FG}^{coord}$, corresponding to twisting a coordinate:
this induces the usual grading of the Lazard ring. 
There is a morphism of stacks
\[ q\colon M_{FG}^{coord}/\mathbb{G}_m \to M_{FG}.   \]
Then $\mathcal{F}(MU) = q_*(\mathcal{O})$. 

Consider next the pullback diagram
\[  
\xymatrix{
M_{cub}^{coord}/\mathbb{G}_m \ar[d]^{q'} \ar[r] & M_{FG}^{coord}/\mathbb{G}_m
\ar[d]^q  \\
M_{cub} \ar[r] &  M_{FG}
}.
\]
Here $M_{cub}^{coord}/\mathbb{G}_m$ is the stack parametrizing cubic curves
together with a coordinate on the formal group, modulo $\mathbb{G}_m$-action.
We have: 

\begin{theorem}[{Cf.  
\cite[Cor. 8.9]{EllSH}}]
\label{mcoordcub}
There is an equivalence of stacks $$
M_{cub}^{coord}/\mathbb{G}_m = 
\spec
(\mathbb{Z}_{(2)}[a_1, \dots, a_6, e_4, e_5, \dots ])/\mathbb{G}_m.$$ \end{theorem} 

The $\left\{a_i\right\}$ are the choice of coefficients for a Weierstrass equation. In fact, the choice of a coordinate modulo degree five on the formal group of a cubic curve is equivalent to the choice of a Weierstrass equation, and the remaining $e_i$ allow one to modify the coordinate in higher degrees.  

We next construct a map $g\colon T=\spec (\mathbb{Z}_{(2)}[\alpha_1, \alpha_3])/\mathbb{G}_m \to
M_{cub}^{coord}/\mathbb{G}_m$ giving a  commutative diagram:
\[ \xymatrix{
 T \ar[rd]^p \ar[r]^g &
 M_{cub}^{coord}/\mathbb{G}_m \ar[d]^{q'} \ar[r] & M_{FG}^{coord}/\mathbb{G}_m
\ar[d]^q  \\
& M_{cub} \ar[r] &  M_{FG}
}.
\]
We define a map $g\colon T \to
M_{cub}^{coord}/\mathbb{G}_m$ by sending the
cubic curve $y^2 + \alpha_1 xy + \alpha_3 y = x^3$ to
the same cubic curve with the canonical coordinate $-x/y$. 
In other words, $a_1 \mapsto \alpha_1, a_3 \mapsto \alpha_3$, and all the other
polynomial generators are mapped to zero. 
This morphism is a closed immersion. 

We let $\mathcal{V}$ be the vector bundle of the previous section: that is,
$\mathcal{V} = p_*(\mathcal{O})$. Then $\mathcal{V}$ is actually a bundle of finite, flat
{commutative algebras} on $M_{cub}$. The diagram naturally furnishes a map 
\[ q'_*(\mathcal{O}) \to q'_*(g_*(\mathcal{O})) \simeq p_*(\mathcal{O}) = \mathcal{V},\] which is a
surjection of sheaves of algebras as $g$ was a closed immersion.
Therefore, we know that 
\( q'_*(\mathcal{O}) \simeq \mathcal{F}(MU),  \)
and the map $\mathcal{F}(DA(1)) \to \mathcal{F}(MU)$ combined with the
surjection $q'_*(\mathcal{O}) \to \mathcal{V}$ induces a map
$\mathcal{F}(DA(1)) \to \mathcal{V}$. This is a morphism of eight-dimensional
vector bundles on $M_{cub}$. 

\subsection{Identification of $\mathcal{F}(DA(1))$}
Our goal is to prove that the map $\mathcal{F}(DA(1)) \to \mathcal{V}$
constructed in the previous section is an isomorphism, or equivalently, that it is a
surjection. The following lemma will be useful. 

\begin{lemma} 
\label{Nakayamavariant}
Let $\mathcal{E} \to \mathcal{F}$ be a morphism of coherent sheaves on the
stack $M_{cub}$ (localized at 2). Let $x\colon \spec (\mathbb{Z}/2) \to M_{cub}$ classify the cuspidal
cubic curve. If $x^* \mathcal{E} \to x^* \mathcal{F}$ is a surjection of
$\mathbb{Z}/2$-vector spaces, then $\mathcal{E} \to \mathcal{F}$ is a
surjection of coherent sheaves. 
\end{lemma}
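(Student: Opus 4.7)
The plan is to reduce to a statement about graded modules over a polynomial ring and then invoke Nakayama's lemma twice, once in the graded form and once in the classical form. This is the principle highlighted in the remark earlier in this section.

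First, I would pull back along the faithfully flat cover $\pi \colon \spec R \to M_{cub}$, where $R = \mathbb{Z}_{(2)}[a_1, a_2, a_3, a_4, a_6]$ is the Weierstrass ring. Since $\pi$ is faithfully flat, surjectivity of $\mathcal{E} \to \mathcal{F}$ is equivalent to surjectivity of the pullback $E \to F$ of $R$-modules. Moreover, since the Weierstrass Hopf algebroid is \emph{graded} (with $|a_i| = 2i$), the coherent sheaves $\mathcal{E}$ and $\mathcal{F}$ correspond to graded comodules, and in particular $E$ and $F$ are finitely generated graded $R$-modules; finite generation forces them to be bounded below.

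Next, the cuspidal cubic point $x\colon \spec \mathbb{Z}/2 \to M_{cub}$ factors as
\[ \spec \mathbb{Z}/2 \longrightarrow \spec \mathbb{Z}_{(2)} \stackrel{a_i \mapsto 0}{\longrightarrow} \spec R \stackrel{\pi}{\longrightarrow} M_{cub}, \]
so that $x^* \mathcal{E} = E / (2, a_1, \dots, a_6) E$ and similarly for $\mathcal{F}$. Since $R$ is nonnegatively graded with $R_0 = \mathbb{Z}_{(2)}$ and augmentation ideal $\overline{R} = (a_1, a_2, a_3, a_4, a_6)$ consisting of positive-degree elements, the graded version of Nakayama's lemma (\thref{NAK}) applied to the bounded-below finitely generated graded modules $E, F$ shows that $E \to F$ is surjective if and only if
\[ E/\overline{R} E \longrightarrow F/\overline{R} F \]
is surjective as a map of $\mathbb{Z}_{(2)}$-modules. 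Then the usual Nakayama lemma over the local ring $\mathbb{Z}_{(2)}$ reduces this in turn to surjectivity modulo $2$, which is precisely the hypothesis $x^*\mathcal{E} \twoheadrightarrow x^*\mathcal{F}$.

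There is no real obstacle; the only thing to check carefully is that the pullback of a coherent sheaf along $\pi$ really is a bounded-below finitely generated graded module (so that graded Nakayama applies cleanly), which follows from coherence together with the fact that the grading on $R$ is nonnegative. All other steps are formal.
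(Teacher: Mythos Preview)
Your proof is correct and is precisely the argument the paper itself identifies as the underlying content: immediately before its proof, the paper remarks that the lemma ``is a consequence of Nakayama's lemma (both in the graded and ungraded forms) applied to the ring $\mathbb{Z}_{(2)}[a_1, a_2, a_3, a_4, a_6]$.'' You have spelled out exactly that.

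The paper, however, chooses to \emph{present} the argument differently. Rather than work directly with graded modules over the Weierstrass ring, it passes to the cokernel $\mathcal{C}$ of $\mathcal{E} \to \mathcal{F}$, observes that $x^*\mathcal{C} = 0$, and concludes that the closed substack $\mathrm{Supp}(\mathcal{C})$ misses the cuspidal point. It then invokes a separate lemma (\thref{cuspidalblackhole}) showing that the cuspidal curve over $\overline{\mathbb{Z}/2}$ lies in the closure of every geometric point of $M_{cub}$ (via the family $y^2 + ua_1 xy + \dots$ degenerating to $y^2 = x^3$ as $u \to 0$), so that any closed substack missing it must be empty. This geometric repackaging makes the role of the cuspidal point as a ``most degenerate'' point more visible, and the closure lemma is reused later in spirit; your direct module-theoretic argument is shorter and entirely self-contained.

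One small point: the version of graded Nakayama stated in the paper as \thref{NAK} is formulated for connected algebras over a \emph{field} $k$, whereas you are applying it over $R_0 = \mathbb{Z}_{(2)}$. This is harmless---the proof of graded Nakayama for bounded-below modules only uses that $\overline{R}$ sits in strictly positive degrees---but strictly speaking you should either note the easy extension or, equivalently, apply the graded argument to the cokernel of $E \to F$ directly: if the cokernel $C$ is bounded below and $C = (2, a_1, \dots, a_6)C$, then its lowest-degree piece satisfies $C_d = 2 C_d$, forcing $C_d = 0$ since $C_d$ is finitely generated over $\mathbb{Z}_{(2)}$.
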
 
\begin{proof}
We can pull back the coherent sheaves to the flat cover $\spec \left(
\mathbb{Z}_{(2)}[a_1, a_2, a_3, a_4, a_6 ]\right)$ to obtain finitely generated
graded modules $E, F$ over 
$\mathbb{Z}_{(2)}[a_1, a_2, a_3, a_4, a_6 ]$ and a map $E \to F$ such that
\[ 
E \otimes_{\mathbb{Z}_{(2)}[a_1, a_2, a_3, a_4, a_6 ]} \mathbb{Z}/2 \to 
F \otimes_{\mathbb{Z}_{(2)}[a_1, a_2, a_3, a_4, a_6 ]} \mathbb{Z}/2
\]
is a surjection. However, Nakayama's lemma, in both its graded and ungraded
forms, now implies that $E \to F$ is a surjection of modules, as desired. 
\end{proof} 

\begin{remark}
\label{gradingx}
We note that if $\mathcal{F}$ is any quasi-coherent sheaf on $M_{cub}$, then 
the $\mathbb{Z}/2$-vector space $x^* \mathcal{F}$ has a canonical grading. 
This was used in 
the proof of Lemma~\ref{Nakayamavariant}, and it also can be interpreted 
as follows: 
 the cuspidal curve $y^2 = x^3$ admits a
$\mathbb{G}_m$-action, and consequently all the vector spaces in question
acquire a canonical $\mathbb{G}_m$-action (i.e., grading).
\end{remark}

We can now prove the main result of this section. 
\begin{proposition} \label{sheafDA1}
The composite map $\mathcal{F}(DA(1)) \to \mathcal{F}(MU) \simeq
q'_*(\mathcal{O}) \to \mathcal{V}$ is an isomorphism of vector bundles on
$M_{cub}$ (in particular, on $\mell$). 
\end{proposition}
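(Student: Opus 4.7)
The plan is to deduce the isomorphism from a surjectivity check at a single point. Both $\mathcal{F}(DA(1))$ and $\mathcal{V}$ are vector bundles of rank $8$ on $M_{cub}$: the source because $DA(1)$ is an even eight-cell complex with $H_*(DA(1);\mathbb{Z}/2)$ of dimension $8$, and the target by \thref{cover}. A surjection of rank-equal vector bundles must be an isomorphism (kernel of rank zero), so it suffices to prove surjectivity. By the lemma immediately preceding the proposition, surjectivity can in turn be checked after pullback along the cuspidal point $x \colon \spec \mathbb{Z}/2 \to M_{cub}$ classifying the curve $y^2 = x^3$.

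Next, I would identify the two fibers explicitly. The target was already computed in the previous section: the Nakayama reduction of the defining relations showed $x^*\mathcal{V} \simeq \mathbb{Z}/2[s,t]/(s^4,t^2)$, with $|s|=2$ and $|t|=6$ inherited from the residual $\mathbb{G}_m$-action at the cuspidal point. For the source, the composite of $x$ with $M_{cub} \to M_{FG}$ classifies the additive formal group over $\mathbb{Z}/2$, and the standard identification of fibers of $\mathcal{F}(X)$ at additive formal groups with ordinary mod-$2$ homology yields
\[ x^*\mathcal{F}(DA(1)) \simeq H_*(DA(1); \mathbb{Z}/2) \simeq \mathbb{Z}/2[\xi_1^2, \overline{\xi}_2^2]/(\xi_1^8, \overline{\xi}_2^4), \]
with $|\xi_1^2|=2$ and $|\overline{\xi}_2^2|=6$. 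A direct comparison shows these two graded $\mathbb{Z}/2$-vector spaces have matching graded dimensions, so any surjection between them is automatically an isomorphism.

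Finally, I would verify surjectivity at $x$ by factoring through $x^*\mathcal{F}(DA(1)) \to x^*\mathcal{F}(MU) \to x^*\mathcal{V}$ and analyzing each piece. The first map is induced on mod-$2$ homology by the topological map $DA(1) \to MU$, whose image in $H_*(BP; \mathbb{Z}/2)$ is the eight-dimensional subspace $E(\xi_1^2) \otimes E(\xi_1^4) \otimes E(\overline{\xi}_2^2)$, as recorded in the construction of $DA(1) \to MU$. The second map is the algebraic quotient arising from the closed immersion $g\colon \spec \mathbb{Z}_{(2)}[\alpha_1, \alpha_3]/\mathbb{G}_m \to M_{cub}^{coord}/\mathbb{G}_m$, which at the cuspidal point sends every generator of $H_*(MU;\mathbb{Z}/2)$ to zero except for those detected by the two Weierstrass coefficients $\alpha_1, \alpha_3$ and their formal-group-law consequences. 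A short explicit calculation identifies $\xi_1^2 \mapsto s$ and $\overline{\xi}_2^2 \mapsto t$ up to units, which, combined with the equality of graded dimensions, forces the composite to be surjective and hence an isomorphism on $x$-fibers.

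The main obstacle is the concrete tracking of generators in the last step: one must confirm that the images of $\xi_1^2$ and $\overline{\xi}_2^2$ in $x^*\mathcal{V}$ really are the algebra generators $s$ and $t$ rather than elements of lower filtration. This reduces to comparing the Hurewicz image of $DA(1)$ in the dual Steenrod algebra with the coordinate change built into $g$, which relates the topological Weierstrass coordinate $-x/y$ on the cuspidal cubic to the formal additive coordinate appearing in the Hurewicz homomorphism $BP \to H\mathbb{Z}/2$. Once these are matched, the proof closes by the graded dimension count.
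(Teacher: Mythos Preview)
Your approach is correct and matches the paper's proof closely: reduce to surjectivity via equal ranks, use the preceding lemma to check at the cuspidal point $x$ over $\mathbb{Z}/2$, identify both fibers, and factor through $x^*\mathcal{F}(MU)$.

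The one place you make life harder than necessary is the final step, which you flag as ``the main obstacle.'' You propose to verify $\xi_1^2\mapsto s$ and $\overline{\xi}_2^2\mapsto t$ by explicitly unwinding the coordinate change relating $-x/y$ on the cuspidal cubic to the additive formal coordinate in the Hurewicz map. The paper avoids this computation entirely with a soft argument: the map $HP_0(MU)\to x^*\mathcal{V}$ was \emph{constructed} as a surjection of graded $\mathbb{Z}/2$-algebras, so it induces a surjection on indecomposables in each degree. In $x^*\mathcal{V}=\mathbb{Z}/2[s,t]/(s^4,t^2)$ the indecomposable quotient is one-dimensional in degrees $2$ and $6$, spanned by $s$ and $t$. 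Since the image of $H_*(DA(1);\mathbb{Z}/2)$ in $HP_0(MU)$ contains indecomposable elements $S$ in degree $2$ and $T$ in degree $6$ (these are your $\xi_1^2$ and $\overline{\xi}_2^2$), they are forced to map to $s$ and to $t$ modulo decomposables. The products $S^iT^j$ then hit $s^it^j$ modulo lower terms, and triangularity gives surjectivity. No explicit coordinate comparison is needed.
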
 

\begin{proof}
Since both bundles are eight-dimensional, it suffices to show that the map is a
surjection. By Lemma~\ref{Nakayamavariant}, it suffices to show that 
if $x\colon \spec (\mathbb{Z}/2) \to M_{cub}$ classifies the cuspidal cubic, then 
\( x^* \mathcal{F}(DA(1)) \to x^*\mathcal{V}  \)
is a surjection of vector spaces.

Now, $DA(1)$ is an \emph{even} 2-local spectrum: in particular, it has free
$\mathbb{Z}_{(2)}$-homology and the AHSS degenerates for $MU_*(DA(1))$. This
implies that
\[ x^* (\mathcal{F}(DA(1))) =  MU_*(DA(1)) \otimes_{MU_*} \mathbb{Z}/2 
 \simeq H_*(DA(1); \mathbb{Z}/2);\]
that is, we can get a description of $\mathcal{F}(DA(1))$ even over the
cuspidal locus in terms of a homology theory. 
The same is true for $x^*(\mathcal{F}(MU)) \simeq H_*(MU; \mathbb{Z}/2)$. 
Note that the grading induced (cf. Remark~\ref{gradingx}) on the vector spaces $x^* \mathcal{F}(DA(1))$
(resp. $x^*\mathcal{F}(MU)$) is simply the grading on homology. 

Let us now describe the two maps
\[ H_*(DA(1); \mathbb{Z}/2) \to H_*(MU; \mathbb{Z}/2), \quad H_*(MU;
\mathbb{Z}/2) \to x^*\mathcal{V} . \]

\begin{itemize}
\item 
First consider the map $H_*(DA(1); \mathbb{Z}/2) \to H_*(MU; \mathbb{Z}/2)$. There is a unique nonzero
indecomposable element $S$ in $H_2(MU; \mathbb{Z}/2)$ and a
nonzero indecomposable element 
$T$ in $H_6(MU; \mathbb{Z}/2)$ with the property that  
\[ \{  1, S, S^2, S^3 , T, ST, S^2 T, S^3 T \}  \]
forms a basis of the image of $H_*(DA(1); \mathbb{Z}/2) \to H_*(MU;
\mathbb{Z}/2)$. 
Under the reduction map $MU \to H \mathbb{Z}/2$ and the induced map $H_*( MU;
\mathbb{Z}/2) \to \A$, $S$ maps to $\xi_1^2$ and $T$ maps to $\zeta_2^2$.
\item 
Consider now the map $H_*(MU; \mathbb{Z}/2) \to x^*(\mathcal{V})$; observe that this is a
morphism of {graded algebras.}
In $x^*(\mathcal{V})$,  one has an element $s$
in degree $2$, an element $t$ in degree $6$, and one has a basis for the algebra
given by \( \{1, s, s^2, s^3, t, st, s^2t , s^3 t\}  \) (cf. the proof of
Proposition~\ref{cover}). 
In fact, as we saw earlier, the algebra is $\mathbb{Z}/2[s, t]/(s^4, t^2)$.
Since $H_*(MU; \mathbb{Z}/2) \to
x^* (\mathcal{V})$ is a surjection of graded algebras, this means that the 
indecomposable element $S$ in degree $2$ of $H_*(MU; \mathbb{Z}/2)$ must map to $s$ in
$x^*(\mathcal{V})$, and the indecomposable element $T$ in degree $6$ must map
to $t$ mod decomposables. 
\end{itemize}

Combining these two observations, it now follows that
the composite map $x^* \mathcal{F}(DA(1)) \to x^*\mathcal{V}$ is an isomorphism, as desired. 
\end{proof}

\subsection{Calculation of $\Tmf_*(DA(1))$}
We now have done the work necessary to compute $\Tmf_*( DA(1))$. 
As before, let $T = \spec( \mathbb{Z}_{(2)}[\alpha_1,\alpha_3])/\mathbb{G}_m$. Note that
it is \emph{not} $T$ and $M_{cub}$ which are relevant to this computation, but
rather $T
\times_{M_{cub}} \mell$ and $\mell$. We will use the descent spectral sequence
\eqref{tmfss} and the determination of $\mathcal{F}(DA(1))$ of the previous
subsections. 

\begin{lemma} \label{isweightedP}
$\mell \times_{M_{cub}} T \simeq \mathbb{P}(1,3)$ is the weighted projective
stack: that is, the complement of the intersection $V(\alpha_1) \cap V(\alpha_3)$ in
$\spec (\mathbb{Z}_{(2)}[\alpha_1, \alpha_3])/\mathbb{G}_m$. 
\end{lemma}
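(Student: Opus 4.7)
The plan is to use the fact, stated earlier in the paper, that $\mell \subset M_{cub}$ is the open complement of the vanishing locus $V(c_4, \Delta)$ of the two modular forms $c_4 \in H^0(M_{cub}, \omega^4)$ and $\Delta \in H^0(M_{cub}, \omega^{12})$, and then to explicitly compute the pullbacks of $c_4$ and $\Delta$ to $T$. Since open substacks of $T$ are determined by their reduced closed complements, this reduces the lemma to a purely algebraic calculation.

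First I would write down the relevant Weierstrass data. By construction of $T \to M_{cub}$, the universal curve pulled back to $T$ is $y^2 + \alpha_1 xy + \alpha_3 y = x^3$, corresponding to $a_1 = \alpha_1$, $a_3 = \alpha_3$, and $a_2 = a_4 = a_6 = 0$. Substituting into the standard formulas for $b_2, b_4, b_6, b_8$ gives $b_2 = \alpha_1^2$, $b_4 = \alpha_1 \alpha_3$, $b_6 = \alpha_3^2$, $b_8 = 0$, from which
\[ c_4 = \alpha_1(\alpha_1^3 - 24 \alpha_3), \qquad \Delta = \alpha_3^3(\alpha_1^3 - 27 \alpha_3). \]

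Next, I claim that $V(c_4, \Delta) = V(\alpha_1, \alpha_3)$ as reduced closed subschemes of $\spec \mathbb{Z}_{(2)}[\alpha_1, \alpha_3]$. The reverse inclusion is clear. For the forward inclusion, observe that if $c_4 = \Delta = 0$, then at the level of geometric points either $\alpha_1 = 0$ or $\alpha_1^3 = 24 \alpha_3$, and simultaneously either $\alpha_3 = 0$ or $\alpha_1^3 = 27 \alpha_3$. Any of the four combinations forces $\alpha_1 = \alpha_3 = 0$: the remaining nontrivial possibility $\alpha_1^3 = 24 \alpha_3$ and $\alpha_1^3 = 27 \alpha_3$ gives $3 \alpha_3 = 0$, which forces $\alpha_3 = 0$ (and then $\alpha_1 = 0$) since $3 \in \mathbb{Z}_{(2)}^{\times}$.

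Finally, the $\mathbb{G}_m$-action on $\mathbb{Z}_{(2)}[\alpha_1, \alpha_3]$ has $\alpha_1$ of (algebro-geometric) weight $1$ and $\alpha_3$ of weight $3$, so by definition the quotient of the open complement of the origin is the weighted projective stack $\mathbb{P}(1,3)$. Combining this with the previous paragraph identifies $\mell \times_{M_{cub}} T$ with $\mathbb{P}(1,3)$. The main obstacle is the radical computation, but this reduces to the short case analysis above; the rest is unwinding definitions.
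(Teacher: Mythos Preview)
Your argument is correct and follows essentially the same approach as the paper: both identify $\mell \times_{M_{cub}} T$ as the complement of $V(c_4,\Delta)$ in $T$ and then check that $V(c_4,\Delta)=V(\alpha_1,\alpha_3)$. The only tactical difference is that the paper, in keeping with its recurring ``reduce to the cuspidal cubic over $\mathbb{Z}/2$'' philosophy, verifies the radical equality after reducing mod $2$ (via graded Nakayama), whereas you compute $c_4$ and $\Delta$ over $\mathbb{Z}_{(2)}$ directly and run a short case analysis at geometric points; both are standard and equally valid routes to the same radical statement.
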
 
\begin{proof} 
We recall (cf. \cite{formulaire}) that the substack $\mell \subset M_{cub}$ is
the complement of the closed substack of cuspidal curves cut out by the
vanishing of the modular forms $c_4, \Delta$. It follows that $\mell
\times_{M_{cub}} T$ is the substack of $T \simeq \spec
(\mathbb{Z}_{(2)}[\alpha_1,
\alpha_3])/\mathbb{G}_m$ complementary to that cut out by the vanishing of $c_4,
\Delta$. 
In other words, we need to show that $c_4, \Delta$ generate an ideal
in $\mathbb{Z}_{(2)}[\alpha_1, \alpha_3]$ which contains all elements of sufficiently
large degree. 

We can again show this after reducing mod $2$. Here we use the expressions mod
$2$ for $c_4, \Delta$ of the cubic curve $y^2 + \alpha_1x y + \alpha_3 y =
x^3$ (easily extracted from \cite[p. 57]{formulaire}); they are
given by 
\[ c_4 \equiv \alpha_1^4 , \quad \Delta \equiv (\alpha_1 \alpha_3)^3 +\alpha_3^4.  \]
These together imply that $c_4, \Delta$ cut out the empty subscheme of
$\mathbb{P}^1_{\mathbb{Z}/2}$ and consequently generate a power of the
irrelevant ideal. 
\end{proof} 
\begin{theorem} 
The descent spectral sequence for $\pi_* ( \Tmf \wedge DA(1))$ collapses. The
terms in nonnegative degrees are given (additively) by $\mathbb{Z}_{(2)}[\alpha_1,
\alpha_3]$: that is, $\pi_* \tau_{\geq 0}(\Tmf \wedge DA(1)) \simeq
\mathbb{Z}_{(2)}[\alpha_1,\alpha_3]$. Here $|\alpha_1| = 2, |\alpha_3| = 6$.
\end{theorem} 
\begin{proof} 

We saw in Proposition~\ref{sheafDA1} that if $p\colon \mathbb{P}(1,3) \to \mell$ was the eight-fold cover as
above, then 
\( \mathcal{F}(DA(1)) \simeq p_* ( \mathcal{O}),  \)
so that by the projection formula,
\[ \mathcal{F}(DA(1)) \otimes \omega^j \simeq p_* ( \omega^j).  \]
Here $\omega$ refers to the usual bundle on $M_{FG}$ or $\mell$. 
Over $\mathbb{P}(1, 3)$, it arises simply from a shift in grading, i.e., from
the graded $\mathbb{Z}_{(2)}[\alpha_1, \alpha_3]$-module 
$\mathbb{Z}_{(2)}[\alpha_1, \alpha_3]\iota$
where $|\iota| = -2$. 

Consequently, 
we have the $E_2$-term of the descent spectral sequence:
\begin{equation} \label{DA1ss} H^i( \mell, \mathcal{F}(DA(1)) \otimes \omega^j)  = 
H^i( \mathbb{P}(1, 3), \omega^j).
\end{equation}
 The
cohomology of a weighted projective stack is the same as the classical
cohomology of projective space, but the grading is modified. 
Namely, 
one has to compute the cohomology of $\mathbb{A}^2 \setminus
\left\{(0, 0)\right\} = \spec (\mathbb{Z}_{(2)}[\alpha_1, \alpha_3]) \setminus
V(\alpha_1,\alpha_3)$
and keep track of the grading. 
We find that $H^\bullet(\mathbb{A}^2 \setminus \left\{(0, 0)\right\},
\mathcal{O})$ is the cohomology of the two-term (Cech) complex:
\[ 
\mathbb{Z}_{(2)}[\alpha_1^{\pm 1}, \alpha_3] \oplus \mathbb{Z}_{(2)}[\alpha_1,
\alpha_3^{\pm 1}]
\to \mathbb{Z}_{(2)}[\alpha_1^{\pm 1},\alpha_3^{\pm 1}],
\]
and the cohomology of $\mathbb{P}(1,3)$ is the same, with the grading taken
into account. 
In particular, the spectral sequence \eqref{DA1ss} is concentrated in the
bottom two rows, and each row is easy to describe. 
We have:
\begin{gather*}  H^0( \mathbb{A}^2 \setminus \left\{(0, 0)\right\},
\mathcal{O}) = \mathbb{Z}_{(2)}[\alpha_1, \alpha_3], \\ 
 H^1( \mathbb{A}^2 \setminus \left\{(0, 0)\right\},
\mathcal{O}) = 
\mathbb{Z}_{(2)} \left\{ \alpha_1^{-1}\alpha_3^{-1}, \alpha_1^{-2}\alpha_3^{-1},
\alpha_1^{-1}\alpha_3^{-2}, \dots \right\}.
\end{gather*}

\end{proof} 

By the gap theorem in  Theorem~\ref{blackbox}, we have
\[ \tau_{\geq 0}( \Tmf \wedge DA(1)) \simeq \tmf \wedge DA(1).  \]
In particular, we also get: 
\begin{corollary} 
We have an additive isomorphism
$\tmf_*(DA(1)) \simeq \mathbb{Z}_{(2)}[\alpha_1, \alpha_3]$. 
\end{corollary}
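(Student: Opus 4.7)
The plan is to deduce the corollary from the preceding proposition combined with the gap theorem, by checking that smashing with the finite complex $DA(1)$ does not spread negative-degree homotopy of $\Tmf$ into the nonnegative range. The preceding proposition already identifies $\pi_{\geq 0}(\Tmf \wedge DA(1))$ with $\mathbb{Z}_{(2)}[\alpha_1, \alpha_3]$, so all that remains is to identify $\tmf \wedge DA(1)$ with the connective cover $\tau_{\geq 0}(\Tmf \wedge DA(1))$.

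First I would set up the defining cofiber sequence $\tmf \to \Tmf \to C$, where by construction $\pi_j C = 0$ for $j \geq 0$ and $\pi_j C = \pi_j \Tmf$ for $j < 0$. The gap theorem (\thref{blackbox}) then strengthens this to $\pi_j C = 0$ for $j > -22$, with possibly nonzero groups only for $j \leq -21$. Smashing the cofiber sequence with the finite spectrum $DA(1)$ yields a cofiber sequence $\tmf \wedge DA(1) \to \Tmf \wedge DA(1) \to C \wedge DA(1)$, and I would then estimate the connectivity of $C \wedge DA(1)$ using the cell structure.

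Since $DA(1)$ is a 2-local eight-cell complex with cells in degrees $0, 2, 4, 6, 6, 8, 10, 12$ (as read off from Figure~\ref{fig:DA1}), a finite induction along the skeletal filtration shows that $C \wedge DA(1)$ has top possible homotopy in degree $12 + (-21) = -9$; in particular $\pi_j(C \wedge DA(1)) = 0$ for all $j \geq -8$. From the long exact sequence this forces $\pi_j(\tmf \wedge DA(1)) \xrightarrow{\simeq} \pi_j(\Tmf \wedge DA(1))$ for $j \geq 0$. On the other hand, $\tmf$ is connective and $DA(1)$ is $(-1)$-connected, so $\tmf \wedge DA(1)$ is connective and its negative homotopy groups vanish automatically.

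Combining these two observations gives $\tmf \wedge DA(1) \simeq \tau_{\geq 0}(\Tmf \wedge DA(1))$, and invoking the previous proposition finishes the proof. The only substantive step is the connectivity estimate for $C \wedge DA(1)$ via the gap, which is where the hypothesis on $DA(1)$ being a finite 12-cell-range complex (as opposed to an arbitrary finite spectrum) is crucial: the gap of width $21$ in $\Tmf$ comfortably dominates the cell range of $DA(1)$. No genuinely new difficulty arises.
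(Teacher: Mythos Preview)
Your argument is correct and is exactly the one the paper uses, just spelled out in more detail: the paper compresses your connectivity estimate into the single line ``By the gap theorem in \thref{blackbox}, we have $\tau_{\geq 0}( \Tmf \wedge DA(1)) \simeq \tmf \wedge DA(1)$.'' There is a minor indexing slip (you write $\pi_j C = 0$ for $j > -22$ when the gap only gives $j > -21$), but the conclusion $\pi_j(C \wedge DA(1)) = 0$ for $j \geq -8$ is unaffected and the proof goes through.
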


\subsection{Connections with level structures}
\label{levestructureinterpretation}
We start by reviewing the modular interpretation of the eight-fold cover. 
Over the locus $M_{cub}[\Delta^{-1}]$ of (nonsingular) elliptic curves, the
restriction of  the cover $T
\to M_{cub}$ is the forgetful functor from the moduli stack of elliptic curves
with a $\Gamma_1(3)$-structure (i.e., a choice of nonzero 3-torsion point, which here is
$(0,0)$) to $M_{cub}[\Delta^{-1}]$; see \cite{MaRe}.
More generally, over $\mell$, the above cover is the cover $(\mell)_1(3) \to
\mell$ of generalized
elliptic curves with a $\Gamma_1(3)$-structure \cite{DR} (cf. \cite[\S 2]{LN2}). 

The recent work of Hill-Lawson \cite{HillLawson} interprets this cover as a
morphism of \emph{derived} stacks. 
In particular, they construct an even periodic derived version of
$(\mell)_1(3)$ which maps to the derived version of the previously
constructed $\mell$ (away from the prime 3). The global sections of the structure sheaf are denoted
$\Tmf_1(3)$ and its connective cover is $\tmf_1(3)$. 

Let $\otop$ be the sheaf of $E_\infty$-ring spectra on the \'etale site of
$\mell$.
Previously, we 
identified the quasi-coherent sheaf on $\mell$ obtained by taking $\pi_0( \otop
\wedge DA(1))$, and we showed that it identifies with the pushforward of the
structure sheaf along $ (\mell)_1(3) \to
\mell$. As a consequence of
the Hill-Lawson work, we can upgrade this to an identification of
quasi-coherent sheaves on derived stacks.

\begin{corollary} \label{identifyqcsheaves2}
The following two quasi-coherent sheaves of $\otop$-modules (on $\mell$
localized at 2) are
equivalent: 
\begin{enumerate}
\item $DA(1) \wedge \otop$. 
\item The pushforward of the structure sheaf $\otop$ on the derived version of
$(\mell)_1(3)$ to $\mell$.
\end{enumerate}
\end{corollary} 
\begin{proof} 
Both yield quasi-coherent sheaves $\mathcal{Q}_1, \mathcal{Q}_2$ of $\otop$-modules on $\mell$ which are
locally free, and 
Proposition~\ref{sheafDA1}
shows that the vector bundles on $\mell$
given by taking $\pi_0$ are isomorphic. 
In order to produce an equivalence $\mathcal{Q}_1 \simeq \mathcal{Q}_2$, we
need to produce a global section of $\mathcal{Q}_3 = \mathcal{Q}_1 \otimes_{\otop}
\hom_{\otop}(\mathcal{Q}_2, \otop)$. 
Now $\mathcal{Q}_3$ is a sheaf of $\otop$-modules such that on $\pi_0$, one
obtains the vector bundle $\mathcal{V} \otimes \mathcal{V}^{\ast}$ on $\mell$,
and the unit is a global section of $\pi_0 \mathcal{Q}_3$. 
This survives to a global section of $\mathcal{Q}_3$ because $\pi_*
\mathcal{Q}_3$ has only cohomology in $H^0$ and $H^1$ as it is pushed forward
from $(\mell)_1(3)$, which has cohomological dimension one. 
In particular, there are no obstructions to producing the equivalence
$\mathcal{Q}_1 \simeq \mathcal{Q}_2$.
\end{proof} 

Taking global sections and connective covers (using the gap in $\pi_* \Tmf$), one obtains: 
\begin{theorem}[Hopkins-Mahowald]  \label{tmf132}
We have an equivalence of $\tmf_{(2)}$-modules $\tmf_{(2)} \wedge DA(1) \simeq
(\tmf_1(3))_{(2)}$. 
\end{theorem}

\subsection{Analogs at odd primes}
We briefly indicate the modifications in the above arguments that can
be used at an odd prime.

When localized at a prime $p > 3$, the moduli stack $\mell$ can be identified
with the weighted projective stack $\mathbb{P}(4, 6)$: that is, any
(possibly nodal) elliptic
curve over a $\mathbb{Z}[1/6]$-algebra $R$ can be (Zariski locally) written in the form
\( y^2 = x^3 + A x + B,  \)
where $A, B$ do not simultaneously vanish. The isomorphisms between elliptic curves are of
the form $(x,y) \mapsto (u^2 x, u^3 y)$ for $u \in R^*$. 
The elements $A$ and $B$ are, up to units in $R$, the modular forms $c_4, c_6$. 
In particular, the descent spectral sequence for $\Tmf_{(p)}$ runs
\[ H^i( \mathbb{P}(4, 6), \omega^j) \implies \pi_{2j-i} \Tmf_{(p)},  \]
and degenerates, since the cohomology is concentrated in $H^0$ and $H^1$. 
One has therefore
\[ \pi_* \Tmf =
\mathbb{Z}_{(p)}[c_4, c_6] \oplus
\mathbb{Z}_{(p)}\left\{c_4^{-1}c_6^{-1}, c_4^{-2}c_6^{-1}, c_4^{-1}
c_6^{-2}, \dots\right\},  \]
and $\Tmf$ is complex-orientable as it is torsion-free.

Next we consider the prime $3$.
Here there is a three-fold cover  of the moduli
stack of cubic curves. 
We will construct this cover, and show that it can be realized via a three-cell
complex. 

\begin{proposition} 
Let $\overline{p} \colon \spec (\mathbb{Z}_{(3)}[\alpha_2, \alpha_4])/\mathbb{G}_m \to M_{cub}$ classify the  cubic curve
$y^2 = x^3 + \alpha_2 x^2 + \alpha_4 x$. 
Then $\overline{p}$ is a finite flat cover of rank three.
\end{proposition} 

\begin{proof} 
We will imitate the arguments of Proposition~\ref{cover}.
Namely, to show finiteness, we can argue as in the proof of
Proposition~\ref{cover} and
reduce to showing that the pullback $\spec (\mathbb{Z}_{(3)}[a_2, a_4])
\times_{M_{cub}} \spec (\mathbb{Z}/3)$ is the spectrum of a finite
$\mathbb{Z}/3$-algebra (where $\spec (\mathbb{Z}/3) \to M_{cub}$ classifies the
cuspidal cubic). 
Using the change-of-variable formulas, we find that this fiber product is the
spectrum of 
\[ \mathbb{Z}/3[r, s, t]/( 2s, 2t, r^3 - t^2) = \mathbb{Z}/3[r]/(r^3),  \]
which is clearly a finite $\mathbb{Z}/3$-algebra of dimension three. We conclude that for any $R$
and for any map $\spec (R) \to M_{cub}$, the fiber product
$\spec (\mathbb{Z}_{(3)}[a_2, a_4])/\mathbb{G}_m
\times_{M_{cub}} \spec (R)$ is a subscheme of $\mathbb{A}^3_R$ cut out by three
equations, and that it is  finite over $R$. As in the proof of
Proposition~\ref{cover}, this implies
flatness. 
\end{proof} 

In order to realize the vector bundle $\overline{p}_*(\mathcal{O})$ on $\mell$ by a
spectrum, we consider the generating element $\alpha_1 \in \pi_3(S^0)_{(3)} = \mathbb{Z}/3$. The cofiber of $\alpha_1$, which is a desuspension of
$\mathbb{HP}^2$, has cohomology generated by elements $x_0, x_4$ with
$\mathcal{P}^1 x_0 = x_4$. 
Since, furthermore, $\alpha_1^2 =0 \in \pi_6(S^0)_{(3)}  =0$, we conclude that
there is a 3-local finite spectrum $X_3$ such that
\[ H^\bullet(X_3; \mathbb{Z}/3) \simeq \left\{x_0, x_4, x_8\right\}, \quad
\mathcal{P}^1 x_0 = x_4, \ \mathcal{P}^1 x_4 = x_8.  \]
To construct $X_3$, we consider the diagram
\[
\xymatrix{
 & & & S^7 \ar@{-->}[ld]^{\phi} \ar[d]^{\alpha_1} \\
S^3 \ar[r]^{\alpha_1} &  S^0 \ar[r] &  \Sigma^{-4} \mathbb{HP}^2 \ar[r] &  S^4
\ar[r]^{-\Sigma \alpha_1} &  S^1
},
\]
where the horizontal line is a cofiber sequence. 
Since $\alpha_1^2 =0$, we find a lifting $\phi: S^7 \to \Sigma^{-4}
\mathbb{HP}^2$ and let $X_3$ be the cofiber of $\phi$.

In the spirit of the previous sections, we prove:
\begin{proposition}
The vector bundle that $X_3$ defines on $\mell$ is isomorphic to
$\overline{p}_*(\mathcal{O} )$.
\end{proposition}
\begin{proof} 
We follow the outline of the earlier arguments. 
In fact, we start by producing a map $X_3 \to MU$ (implicitly localized at 3) which induces an isomorphism on $\pi_0$, using the degeneration of
the AHSS. In homology, the map
\[ H_*(X_3; \mathbb{Z}/3) \to H_*(MU; \mathbb{Z}/3) \simeq \mathbb{Z}/3[x_1,
x_2, \dots,  ], \quad |x_i| = 2i,  \]
is an embedding whose image contains an indecomposable generator in degree four,
 and its square. As a result, one gets a map of vector bundles on $M_{FG}$,
\( \mathcal{F}(X_3) \to \mathcal{F}(MU),  \)
such that when one takes the fiber over the additive formal group over $\spec
(\mathbb{Z}/3)$, one obtains the above map in homology. 

Next, consider the cover $q\colon  \spec (L)/\mathbb{G}_m \to M_{FG}$ and the
pullback
(cf. Theorem~\ref{mcoordcub})
$$q'\colon   \spec (\mathbb{Z}_{(3)}[a_1, \dots, a_6, \left\{e_n\right\}_{n
\geq 4}])/\mathbb{G}_m \simeq M_{cub}^{coord} \to M_{cub}.$$
As before, $q'_*(\mathcal{O})$ is the sheaf on $M_{cub}$
that one obtains from $MU$. One produces a map 
\( q'_*(\mathcal{O}) \twoheadrightarrow \overline{p}_*(\mathcal{O}),  \)
by considering the closed embedding $$ \spec( \mathbb{Z}_{(3)}[\alpha_2,
\alpha_4])/\mathbb{G}_m \hookrightarrow 
 \spec \left(\mathbb{Z}_{(3)}[a_1, \dots, a_6, \left\{e_n\right\}_{n
\geq 4}]\right)/\mathbb{G}_m,$$
which sends $a_2 \mapsto \alpha_2, a_4 \mapsto \alpha_4$, and annihilates all
the other generators. 

The claim is that the composite map 
\( \mathcal{F}(X_3) \to \mathcal{F}(MU) \to \overline{p}_*(\mathcal{O}),  \)
is an isomorphism of vector bundles on $M_{cub}$, which as before can be checked by showing 
that the map yields a surjection when one takes the fiber over the cuspidal
cubic. To see this, we observe that when one takes the fiber over the cuspidal
cubic, one gets the embedding $H_*(X_3; \mathbb{Z}/3) \to H_*(MU; \mathbb{Z}/3)$, whose image
contains an indecomposable
generator in degree $4$ and its square. The map from $H_*(MU; \mathbb{Z}/3)$ to the fiber of
$\overline{p}_*(\mathcal{O})$ over the cuspidal cubic (which we have checked to
be $\mathbb{Z}/3[r]/(r^3)$) is a map of graded algebras and induces a surjection on
indecomposables. From this, the conclusion follows similarly. 
\end{proof} 

The threefold cover of $\mell$ one obtains here is obtained from
$\Gamma_1(2)$-structures \cite[\S 7]{St12} . 
Using similar techniques as in Theorem~\ref{tmf132} and the Hill-Lawson work
\cite{HillLawson}, one obtains from the above
analysis: 

\begin{theorem} 
One has an equivalence of $\tmf_{(3)}$-modules
$\tmf_{(3)} \wedge X_3 \simeq \tmf_1(2)_{(3)}$.
\end{theorem}

\section{The remaining computations}

In this section, we finish the computations. We use the 2-local complex
$DA(1)$ and the 3-local complex $F$ to analyze the Adams-Novikov spectral
sequence for $\tmf$. 
We then identify the spectrum $\tmf_{(2)} \wedge DA(1) \simeq \tmf_1(3)_{(2)}$ with a form of
$BP\left \langle 2\right\rangle$, 
and using Hopf algebra manipulations, we are able to recover $H_*(\tmf;
\mathbb{Z}/2)$. 

\subsection{Calculation of $\Tmf_*(MU)$ and the stack for $\tmf$}
In this subsection, we 
compute the homotopy groups of $\Tmf \wedge MU$ and $\tmf \wedge MU$.  This
leads to the description of the Adams-Novikov spectral sequence for $\tmf$ in
terms of the moduli stack of cubic curves. 
Throughout, we work {integrally.}

\begin{proposition} 
\label{TmfMU}
Let $R = \mathbb{Z}[a_1, \dots, a_6, \{e_n\}]_{n \geq 4}$.
Then $\Tmf_*(MU)$ is a module over $R$. As an $R$-module, it is
isomorphic to
\( R \oplus C , \)
where $C_* = (R/(c_4^\infty, \Delta^\infty))_{* + 1}$ and $R/(c_4^\infty,
\Delta^\infty)$ is the cokernel of $R[c_4^{-1}] \oplus R[\Delta^{-1}] \to R[(c_4
\Delta)^{-1}]$.
\end{proposition}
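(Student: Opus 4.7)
The plan is to apply Corollary \ref{otopcolim} to write $\Tmf \wedge MU \simeq \Gamma(\mell, \otop \wedge MU)$, then run the descent spectral sequence and compute the relevant sheaf cohomology on $\mell$ via an explicit \v{C}ech cover.

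First I would identify the sheaf $\mathcal{F}(MU)$ pulled back to $\mell$. Using Example~\ref{complexorstack}, the sheaf on $M_{FG}$ comes from the pushforward of $\mathcal{O}$ along $M_{FG}^{coord}/\mathbb{G}_m \to M_{FG}$; since this morphism is affine, base change identifies the pullback to $\mell$ with the pushforward along
\[ \pi\colon U \stackrel{\mathrm{def}}{=} \mell \times_{M_{cub}} M_{cub}^{coord}/\mathbb{G}_m \to \mell. \]
But $M_{cub}^{coord}/\mathbb{G}_m \simeq \spec R/\mathbb{G}_m$ and $\mell \subset M_{cub}$ is the complement of the cuspidal locus $V(c_4,\Delta)$, so $U \simeq (\spec R/\mathbb{G}_m)\setminus V(c_4,\Delta)$. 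By the projection formula, the descent spectral sequence of \eqref{tmfss} reads
\[ E_2^{i,j} = H^i\bigl(U, \mathcal{O}(j)\bigr) \implies \pi_{2j-i}(\Tmf \wedge MU), \]
where $\mathcal{O}(1)$ encodes the grading shift by $2$ on $R$.

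Next I would compute the right-hand side via the \v{C}ech cover of $U$ by the two affine opens $D(c_4), D(\Delta)$ (both quotients are affine because $c_4, \Delta$ have positive degree). Summed over all Serre twists, the two-term \v{C}ech complex is exactly
\[ R[c_4^{-1}] \oplus R[\Delta^{-1}] \longrightarrow R[(c_4\Delta)^{-1}]. \]
By definition its cokernel is $C$, and the kernel is $R$ itself provided $(c_4,\Delta)$ is a regular sequence in $R$. The latter reduces to the subring $\mathbb{Z}_{(2)}[a_1,\dots,a_6]$ (as $R$ is a polynomial algebra over this subring in the $e_n$'s), where regularity amounts to $\Delta$ being a nonzerodivisor modulo $c_4$; this can be verified directly from the explicit formulas for $c_4,\Delta$ (for instance, by reducing mod $2$ as in the proof of Lemma~\ref{isweightedP}, where the cuspidal locus in $M_{cub}$ is visibly of codimension two), and is the step I expect to be the main technical obstacle.

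Since $U$ has cohomological dimension one, all higher rows of $E_2$ vanish and every differential $d_r$ for $r\geq 2$ lands in a row of index $\geq 2$; the spectral sequence therefore degenerates at $E_2$, leaving only an additive extension
\[ 0 \to C \to \Tmf_*(MU) \to R \to 0 \]
(where $R$ sits in filtration $0$ and $C$ in filtration $1$, with the appropriate internal gradings). Finally, to split this as $R$-modules, I would produce a section: the unit $\Tmf \to \Tmf\wedge MU$ together with the choice of generators $a_1,\dots,a_6,e_4,\dots \in \pi_*(\Tmf\wedge MU)$ (the $a_i$ coming from a Weierstrass form of the universal cubic over $\spec R$ and the $e_n$ from coordinate corrections, via a section of $\spec R \to M_{cub}^{coord}/\mathbb{G}_m$) defines a ring map $R \to \Tmf_*(MU)$ landing in filtration $0$, and this is the desired splitting.
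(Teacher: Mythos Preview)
Your proposal is correct and follows essentially the same route as the paper: identify $\mathcal{F}(MU)$ on $\mell$ with the pushforward from $(\spec R \setminus V(c_4,\Delta))/\mathbb{G}_m$, run the descent spectral sequence, compute via the two-term \v{C}ech complex for the $c_4,\Delta$ cover, observe degeneration from cohomological dimension one, and split by lifting the edge homomorphism. The only cosmetic difference is that the paper defers the regularity of $(c_4,\Delta)$ in $R$ to a later lemma rather than sketching it inline as you do.
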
 
\begin{proof}
This follows from the descent spectral sequence $H^i( \mell, \mathcal{F}(MU)
\otimes \omega^j) \implies \pi_{2j-i} (\Tmf \wedge MU)$.
The sheaf $\mathcal{F}(MU)$ on $\mell$ is obtained by
pushing forward the structure sheaf along the affine map
\[ \mathrm{Spec}(\pi_*MU)/\mathbb{G}_m \times_{M_{FG}} \mell \to \mell.  \]
The source of this morphism is identified with 
$\left( \spec (R) \setminus
 V(c_4, \Delta) \right)/\mathbb{G}_m$ by Theorem~\ref{mcoordcub}, since the locus
 $\mell \subset M_{cub}$ is the complement of the closed substack defined by
 $c_4, \Delta$.

Let $B$ be the scheme 
$\spec (R) \setminus
 V(c_4, \Delta)$. 
 Consequently, we get for the $E_2$ page of the descent spectral sequence for $\Tmf_*(MU)$,
 $$ E_2^{i, 2j}   = H^i( \mell, \mathcal{F}(MU) \otimes \omega^j)
\simeq H^i(B, \mathcal{O})_j,
$$
where the subscript $j$ denotes taking the $j$th piece. 
The scheme $B$ is covered by two affine opens given by the
 localizations at $\Delta$ and $c_4$. In particular, it has cohomological
 dimension one, and for dimensional reasons the descent spectral sequence for
 $\Tmf \wedge MU$ degenerates.
We can describe the cohomology of $B$
as that of the Cech complex
\[ R [c_4^{-1}]
\oplus  R [\Delta^{-1}]
\to 
R[ (c_4 \Delta)^{-1}].
\]
Putting this together, the result follows easily as the $H^0$ gives $R$ and
the $H^1$ gives $R/(c_4^\infty, \Delta^\infty)$. 
\end{proof}

We observe that $C$ can be identified with the ideal of $\Tmf_*(
MU)$ given by elements in positive filtration degree in the filtration from the descent
spectral sequence, and consequently $C^2 = 0$. 

We will now describe the stack corresponding to $\tmf$, or
equivalently describe the structure of the Adams-Novikov spectral sequence for
$\tmf$. 
We will see that the stack associated to $\tmf$
(in the sense of sec.~\ref{sec:stackring})
is precisely the moduli stack
$M_{cub}$ of cubic curves. This produces a spectral sequence that allows
computation with $\tmf$. 

\begin{corollary}[{Cf. \cite[Prop. 20.1]{rezk512}}]
One has $\pi_*(\tmf \wedge MU ) = \mathbb{Z}[a_1, a_2, a_3, a_4, a_6, \{e_n\}]_{n \geq 4}$. \end{corollary} 
\begin{proof} We begin by computing $\tmf_*(MU)$. In fact, we know
(Proposition~\ref{TmfMU}) that $\pi_* ( \Tmf \wedge MU)$  canonically surjects onto the ring
in question with kernel an ideal $C \subset \Tmf_*(MU)$ of square zero, and that there is a map
\( \tmf \wedge MU \to  \Tmf \wedge MU.  \)
The claim is that  the composite
\begin{equation} \label{thismap} \tmf_*(MU ) \to \Tmf_*(MU) \to \Tmf_*(MU)/C
\simeq \Tmf_*(MU)/F_1 \Tmf_*(MU)  \end{equation}
is an isomorphism, where we use the notation of Proposition~\ref{TmfMU} for $C$ and where $F_1$ refers to
the filtration from the descent spectral sequence. 
This will compute $\tmf_*(MU)$, as desired. 
We will prove this locally at each prime $p$. 

In order to prove this, we will use the existence of a finite \emph{even}
$p$-local spectrum $Z$ with the following properties:
\begin{enumerate}
\item 
 $\Tmf \wedge Z$ is a complex-orientable ring spectrum.
\item  $\tmf \wedge Z = \tau_{\geq 0}( \Tmf \wedge
Z)$.
\end{enumerate}

For $p = 2$, we proved this fact with $Z = DA(1)$. 
For $p = 3$, we proved this fact with $Z = X_3$. For $p \geq 5$, we can take $Z =
S^0$, since $\Tmf$ and $\tmf$ are complex-orientable ring spectra with
torsion-free homotopy groups. 

We will now use these three facts to prove that \eqref{thismap} is an isomorphism at
the arbitrary prime $p$.
To start with,  we note that $\tmf \wedge MU$ is a complex-orientable $E_\infty$-ring,
so that, since $Z$ is an even spectrum,
$  \tmf_*( MU \wedge Z )$ is a sum of copies of $\tmf_*(MU)$ (possibly
shifted).
The analog holds for $\Tmf_*( MU \wedge Z)$. 
Moreover, since
the map
\( \tmf \wedge Z \to \Tmf \wedge Z , \)
induces a split injection on homotopy groups, we find that 
\[ \pi_*( \tmf \wedge Z \wedge MU ) \to \pi_*( \Tmf \wedge Z \wedge MU)   \]
is a split injection of graded abelian groups in view of the (known)
$MU$-homology of any complex-oriented ring spectrum.

Since $Z$ is even,
we conclude that $\tmf_*(MU) \to \Tmf_*(MU)$ is a split injection.
It follows that both $\tmf_*(MU)$ and the cokernel of $\tmf_*(MU) \to
\Tmf_*(MU)$ are torsion-free abelian groups. 
To show that $\tmf_* (MU) \cap F_1 \Tmf_*(MU) = 0$ and that 
$\tmf_*(MU) \to \Tmf_*(MU)/F_1 \Tmf_*(MU)$ is an isomorphism, it now suffices to
work \emph{rationally}, since the kernel and the cokernel are torsion-free. This is much easier. 
In fact, rationally, the descent spectral sequence for $\Tmf$ degenerates and is concentrated
in the zeroth and first row. 
The image of $\pi_* \tmf \otimes \mathbb{Q} \to \pi_* \Tmf \otimes \mathbb{Q}$
consists of the elements on the zeroth row: that is, $\pi_* \tmf \otimes
\mathbb{Q} \to \pi_* \Tmf \otimes \mathbb{Q} \to (\pi_* \Tmf/F_1 \pi_* \Tmf)
\otimes \mathbb{Q}$ is an isomorphism. It follows that this is true after
tensoring $\tmf$ with any rational spectrum. 
\end{proof}

In particular, we can describe the spectral sequence for $\tmf$-homology.

\begin{corollary}[Hopkins-Mahowald] 
The stack for $\tmf$ is identified with $M_{cub}$.
In particular, let $X$ be a spectrum. Then $X$ defines a quasi-coherent sheaf $\widetilde{F}(X)$
on the moduli stack $M_{cub}$ (if $X$ is even, then $\widetilde{F}(X)$ is the
pullback of $\mathcal{F}(X)$ under $M_{cub} \to M_{FG}$) and a spectral
sequence
\[ H^i( M_{cub}, \widetilde{F}(X) \otimes \omega^j) \implies \tmf_{2j-i}(X).  \]
\end{corollary} 
In  \cite{rezk512}, Rezk uses instead the Thom spectrum $X(4)$ and reproduces
the Weierstrass Hopf algebroid (cf. \cite[Prop. 12.4 and Th. 14.5]{rezk512}). 
\begin{proof}

This analysis shows that the morphism
\( \tmf_*(MU) \to \Tmf_*(MU) \to \Tmf_*(MU)/C,  \)
is an isomorphism, where $C$ is the ideal consisting of those elements of
filtration $1$ in the (degenerate) descent spectral sequence for $\Tmf \wedge MU$. 
The same is therefore true when $MU$ is replaced by any wedge of suspensions of
$MU$, for instance any smash power of $MU$. 
We find that, for any $s >0$, 
$\tmf_*(MU^{\wedge s})$ can be described as global sections of the structure
sheaf on the stack
\[ \mell^{coord} \times_{\mell} \dots \times_{\mell} \mell^{coord}  \]
(with $s$ factors): that is, the contributions of $H^1$ terms in the
$\Tmf$-homology can be ignored. The ring of global sections over this stack 
is the same as the ring of global sections over the larger stack
\[ M_{cub}^{coord} \times_{M_{cub}} \dots \times_{M_{cub}} M_{cub}^{coord} , \]
which differs by a substack of codimension $2$. We find that 
the Hopf algebroid
\[ \tmf_*(MU) \rightrightarrows \tmf_* (MU \wedge MU) \triplearrows \dots  \]
precisely writes down the presentation of $M_{cub}$ via the flat cover
$M_{cub}^{coord} \to M_{cub}$. 
\end{proof}

The stack $M_{cub}$ can be presented by the Weierstrass Hopf algebroid. We
remark that this spectral sequence is the Adams-Novikov spectral sequence for
$\tmf \wedge X$.
In \cite{computation}, it is used to calculate $\pi_* \tmf$.

\subsection{$\tmf \wedge DA(1)$}
We will now identify the ring spectrum $\tmf_1(3)$  with 
a form of $BP\left \langle 2\right\rangle$. 

We begin with some generalities. 
Implicitly, we work at a prime $p$. 
Given an $E_\infty$-ring spectrum $R$ and an element $r \in \pi_k(R)$, we write
$R/r$ for the cofiber of $r \colon \Sigma^k R \to R$. 
Given a sequence of elements $r_1, r_2, \dots,  \in \pi_*(R)$, we let
$R/(r_1, r_2, \dots )$ denote the colimit over $n$ of the finite smash products $R/r_1
\wedge_R \dots \wedge_R R/r_n$. When $\pi_*(R)$ is concentrated in even degrees
and the $r_i$'s are nonzerodivisors in $\pi_*(R)$, it is known that these can always be
given $A_\infty$-algebra structures in $R$-modules (cf. \cite[Sec.
3]{angeltveit}).

\begin{definition} \label{BPn}
A $MU$-module spectrum $M$ is said to be \emph{a form of $BP\left \langle
n\right\rangle$} (at the prime $p$) if there exist
elements $x_i \in  \pi_{2i}MU$ for $i \in \mathbb{Z}_{>0} \setminus \left\{p-1, p^2 - 1,
\dots, p^n - 1\right\}$ such that one has an equivalence
of $MU$-modules
\[ MU_{(p)}/(\{x_i\} ) \simeq M,  \]
and such that $x_i$ generates $\pi_{2i}(MU)$ modulo decomposables. 
We will frequently abuse notation and write $BP\left \langle n\right\rangle$ for any form
of $BP\left \langle n\right\rangle$. 
\end{definition} 

In general, it does not seem easy to tell whether an $MU$-module spectrum is a
form of $BP\left \langle n\right\rangle$ simply from looking at its homotopy
groups as a module over $MU_* = \pi_*(MU)$. However, the following examples  will be useful in the future. 
We note also that recent work of Angeltveit-Lind \cite{AL15} has shown that the
\emph{spectrum} obtained by $p$-completing $BP\left \langle n\right\rangle$ is determined by its mod $p$
cohomology.

\begin{example} \label{BP2crit}
Suppose $M$ is an $MU$-module spectrum with $$M_* \simeq BP_*/(v_{n+1}, v_{n+2}, \dots,
) \simeq \mathbb{Z}_{(p)}[v_1, v_2, \dots, v_n].$$
Suppose $M$ admits the structure of an \emph{$MU$-ring spectrum}, i.e., the
structure of a possibly nonassociative algebra object in the \emph{homotopy category} of
$MU$-modules, inducing the natural
ring structure on $M_*$. Then $M$ is a form of $BP\left \langle
n\right\rangle$. 

In fact, consider the map $MU_* \to M_*$ induced by the unit $MU \to M$, and choose indecomposable generators
$y_i$ in $\pi_{2i}(MU)$ (for each $i \neq p - 1, p^2 -1 ,\dots, p^n-1$) for the
kernel. The unit map $MU \to M$ extends over $MU/y_i \to M$, and we can take
the smash product of all of these together and localize at $p$ to get a map
$MU_{(p)}/(\left\{y_i\right\}) \simeq BP\left \langle n\right\rangle \to M$,
which is an isomorphism on homotopy groups. 
\end{example}

Observe that $\tmf \wedge MU$ is an $E_\infty$-ring spectrum, so that we can
make various quotients in the category of $\tmf \wedge MU$-modules. Consider
in particular 
\( (\tmf _{(2)}\wedge MU)/(a_2, a_4, a_6, \{e_n\}).   \)
The next lemma will enable us to analyze it. 

\begin{lemma}\label{MUmodulething} The homotopy groups of the $MU$-ring
spectrum $\tmf_{(2)} \wedge MU/( a_2, a_4, a_6,\{
e_n\})$ are given by $\mathbb{Z}_{(2)}[a_1, a_3]$, and it is a form of $BP\left \langle 2\right\rangle$. 
\end{lemma} 

Let $I $ denote the ideal $ (a_2, a_4, a_6, \{e_n\}) \subset R$; we will (by abuse of notation) write
$(\tmf_{(2)} \wedge MU)/I$ for the quotient $(\tmf_{(2)} \wedge MU)/(a_2, a_4, a_6,
\left\{e_n\right\})$. 

\begin{proof} 
It suffices to describe the Hurewicz map $MU_* \to M_*$, in view of
Example~\ref{BP2crit}. By \cite[Cor. 3.2]{angeltveit}, $M$ is an $A_\infty$-algebra in
$MU$-modules. 

Given an elliptic spectrum $E$ associated to an elliptic curve over $\spec
(R)$, the map $MU_* \to E_*(MU)$ yields
the map $\spec (E_*(MU)) \to \spec  (MU_*)$ from the ring
classifying coordinates on the formal group 
 to the ring classifying formal
group laws. In particular, the map 
\[ MU_* \to \tmf_*(MU) \simeq \Tmf_*(MU)/F_1 \Tmf_*(MU)  \]
classifies the formal group law constructed by choosing the coordinate $-y/x +
\sum_{n \geq 4} e_n (-y/x)^{n+1}$ on the
formal group of $y^2 + a_1 xy + a_3 y = x^3 + a_2 x^2 + a_4 x + a_6$. 
The composite $MU_* \to (\Tmf_*(MU)/F_1 \Tmf_*(MU))/I$  classifies the formal
group law associated to the coordinate $-y/x$ on $y^2 + a_1 xy + a_3 y = x^3$. 

Here we use the formulas given in  \cite[Ex. 4.5, Ch. 4]{silverman}. 
The expansion of the power series $[2](z)$ for the formal group law associated to
a Weierstrass equation is  
\[ [2](z) =  2 z - a_1 z^2 - 2a_2 z^3 + (a_1 a_2 - 7a_3 )z^4  + \dots,  \]
where in our case $a_2 = 0$ and $7$ is invertible. 
In particular, if we take $v_1, v_2$ to be  indecomposable elements of $MU$ in
degrees $2$ and $6$ (e.g., the coefficients of $z^2, z^4$ in $[2](z)$), and
take our equation to be $y^2 + a_1 xy + a_3 y = x^3$, then $v_1$ maps to a unit times $a_1$, and $v_2$ maps to a
unit times $a_3$. It follows that there exist indecomposable generators
in $\pi_i(MU)$ for $i \neq 2, 6$ which generate the kernel of the surjective map
$\pi_*(MU)
\to \mathbb{Z}_{(2)}[a_1,a_3]$. This proves the result. 
\end{proof} 
\begin{corollary} \label{Tmfmodules}
The map of $\tmf$-modules
\( \tmf \wedge DA(1) \to (\tmf \wedge MU)/I\)
is an equivalence.
\end{corollary} 
\begin{proof} 
This follows from 
Proposition~\ref{sheafDA1} and from the construction of the vector bundle $\mathcal{V}$
used there. 
Note that $\pi_*(\tmf \wedge DA(1))$ is isomorphic to the quotient of $\pi_*(
\Tmf \wedge DA(1))$ by the elements in filtration $1$. 
\end{proof}

Combining Lemma~\ref{MUmodulething} and Corollary~\ref{Tmfmodules}, we find:
\begin{theorem}[Hopkins-Mahowald \cite{HM}] 
\label{tmfwedgeDA1}
$\tmf_{(2)} \wedge DA(1)  \simeq \tmf_1(3)_{(2)} \simeq  \tau_{\geq
0}(\Tmf_{(2)} \wedge DA(1))$ is a form of $BP \left \langle
2\right\rangle$. 
\end{theorem}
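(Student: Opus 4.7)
The plan is essentially to concatenate the two preceding results, since the heavy lifting has already been done. First, I would recall \Cref{Tmfmodules}, which produces a map of $\tmf$-modules
\[ \tmf \wedge DA(1) \longrightarrow \tau_{\geq 0}\!\left((\Tmf \wedge MU)/I\right), \qquad I = (a_2, a_4, a_6, \{e_n\}_{n \geq 4}), \]
and shows that it is an equivalence. The point is that this map is obtained from the topologically constructed map $DA(1) \to MU$ composed with the $E_\infty$-ring quotient of $\Tmf \wedge MU$ by $I$ (a quotient which makes sense because $\Tmf \wedge MU$ is an $E_\infty$-algebra over $MU$).

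Next, I would invoke \Cref{MUmodulething}: the regularity of the sequence $a_2, a_4, a_6, \{e_n\}$ on $\pi_*(\Tmf \wedge MU) \simeq R \oplus C$ allows one to compute $\pi_*((\Tmf \wedge MU)/I)$ as the cohomology of the relevant Cech complex over $R' = \mathbb{Z}_{(2)}[a_1, a_3]$, which is the Cech complex for $\mathbb{P}(1,3)$. Taking the connective cover kills the $H^1$ part (which lives in negative degrees by the gap theorem), leaving $\pi_*(\tau_{\geq 0}((\Tmf \wedge MU)/I)) \simeq \mathbb{Z}_{(2)}[a_1, a_3]$ with $|a_1| = 2, |a_3| = 6$.

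Finally, to conclude that the resulting $MU$-module spectrum is a form of $BP\left\langle 2\right\rangle$, I would appeal to the criterion of \Cref{BP2}: it suffices to show that the Hurewicz map $MU_* \to \pi_*(\tau_{\geq 0}((\Tmf \wedge MU)/I))$ sends some indecomposable generator $v_1 \in \pi_2 MU$ to a unit multiple of $a_1$, and some indecomposable $v_2 \in \pi_6 MU$ to a unit multiple of $a_3$. This is read off from the expansion of the $2$-series for the Weierstrass formal group law $y^2 + a_1xy + a_3y = x^3$ given in Silverman's formulas (\Cref{MUmodulething}), where the coefficient of $z^2$ in $[2](z)$ is $-a_1$ and the coefficient of $z^4$ is $-7a_3$, with $7$ invertible at the prime $2$.

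The conceptually substantive step—identifying $\mathcal{F}(DA(1))$ on $M_{cub}$ with the eight-fold cover bundle $\mathcal{V}$—was completed in \Cref{sheafDA1}, and the analytic step of controlling the filtration and the $H^1$ contributions via the gap theorem occurred in \Cref{Tmfmodules}. So the only genuine obstacle at this stage is bookkeeping: verifying that the Weierstrass $[2]$-series indeed produces \emph{units} times $a_1$ and $a_3$ in the appropriate quotient, which is a direct (and $2$-locally valid) calculation. Once that is in hand, the theorem follows by combining \Cref{Tmfmodules} and \Cref{MUmodulething}.
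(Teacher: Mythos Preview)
Your proposal is correct and follows exactly the paper's approach: the paper's proof is literally the one-line statement ``Combining \Cref{MUmodulething} and \Cref{Tmfmodules}, we find \dots'', and your write-up simply unpacks what those two results contain. The Weierstrass $[2]$-series computation and the appeal to \Cref{BP2} that you mention are already carried out inside the proof of \Cref{MUmodulething}, so at this stage there is nothing left to check.
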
 

\subsection{The mod 2 homology of $\tmf$}
In the previous subsection, we established the 2-local equivalence $\tmf \wedge
DA(1) \simeq BP \left \langle 2\right\rangle$. Using Hopf algebra manipulations and
the homology of $BP \left \langle 2\right\rangle$, we can now calculate the
homology of $\tmf$.

We begin 
by recalling the homology of $BP\left \langle n\right\rangle$. 
The result is classical and follows from the spectral sequence
$\mathrm{Tor}_{\pi_* MU}(\pi_* BP\left \langle n\right\rangle; H_*(MU;
\mathbb{Z}/2)) \implies H_*(BP\left \langle
n\right\rangle; \mathbb{Z}/2)$; see also \cite[Th. 4.4]{LN2}.
\begin{proposition} \label{BPnhomology}
The map $BP \left \langle n-1\right\rangle \to H \mathbb{Z}/2$ induces an
injection on homology, with image 
\( \mathbb{Z}/2[ \zeta_1^2, \zeta_2^2, \dots, \zeta_{n}^2, \zeta_{n+1},
\zeta_{n+2}, \dots 
] \subset \A .  \)
\end{proposition}

We will identify the subalgebra of $H_*( BP \left \langle
2\right\rangle; \mathbb{Z}/2)$ given by the homology of $\tmf$. 
First, we need a few lemmas. 
Let $k$ be a field. 
As usual, a nonnegatively graded $k$-algebra $R$ is called \emph{connected} if $k \to R_0$ is an
isomorphism, and a graded $R$-module  $M$ is called \emph{connective} if it is zero in
negative degrees. Given $R$, we let $\overline{R} = \bigoplus_{i >0} R_i$.
We recall the following: 

\begin{lemma}[Nakayama's lemma] \label{NAK}
Let $R$ be a graded, connected $k$-algebra with augmentation ideal
$\overline{R}$. 
If $M$ is a connective $R$-module which is flat, then it
is free (in particular, if in addition $M \neq 0$, then $M$ is faithfully flat). 
\end{lemma} 

\begin{lemma} \label{sublemhopf}
Let $B$ be a commutative, graded connected Hopf algebra over a field $k$ and
let $A \subset B$ be a graded comodule subalgebra. 
Then we can write the inclusion $A \subset  B$ as a filtered
colimit of inclusions
\( A_j  \subset B_j , \ j \in J, \)
where $B_j \subset B$ is a \emph{finitely generated} commutative  graded,
connected Hopf
subalgebra, and $A_j   \subset B_j$ are \emph{finitely generated}
subalgebras
which are also  comodules for $B_j$. 
\end{lemma}
By ``commutative graded,'' we do not mean ``graded-commutative.''
 The lemma is false without the graded and connected hypotheses: for instance,
consider the ring of functions on the multiplicative group, $k[t, t^{-1}]$ (a
Hopf algebra with $\Delta(t) = t \otimes t$), and
the comodule subalgebra $k[t]$. Geometrically, this corresponds to the
$\mathbb{G}_m$-variety $\mathbb{A}^1$: the natural inclusion map $\mathbb{G}_m
\to \mathbb{A}^1$ (of $\mathbb{G}_m$-varieties)
is not faithfully flat. 
We remark that the existence of such structure theorems for comodule algebras in the graded,
connected case goes back to \cite{MM}.

\begin{proof} 
First, observe that $B$ is a filtered colimit of finitely generated graded Hopf
subalgebras \cite[Lemma 21.1.2]{concise}. 
Observe also that $A$, as a $B$-comodule, is a filtered colimit of
finite-dimensional graded $B$-comodules. Given a finite-dimensional $B$-comodule
$M \subset A$, it is, as a result, a $\widetilde{B}$-comodule for
$\widetilde{B}$ a  large enough finitely generated (graded) subHopf algebra of $B$. Now,
consider the pairs $(\widetilde{A}, \widetilde{B})$ for $\widetilde{A}$ the
subalgebra of $A$ generated by $M$. Then $\widetilde{A}$ is a finitely
generated algebra and a $\widetilde{B}$-comodule. 
Clearly, given any finite collection of elements of $A$ and any finite
collection of elements of $B$, we can find a pair $(\widetilde{A},
\widetilde{B})$ containing all of them. 
\end{proof} 
\begin{proposition} \label{freecomodule}
Let $B$ be a commutative graded, connected Hopf algebra over a field $k$,
 and let $A \subset A' \subset B$ 
be comodule subalgebras. Suppose $B$ is a domain. Then $A'$ is graded free over $A$. 
\end{proposition} 
\begin{proof} 
We will show that $B$ is faithfully flat over $A$. This also implies that $B$
is faithfully flat over $A'$, and combining these observations shows that $A'$
is faithfully flat over $A$. Lemma~\ref{NAK} will then imply that $A'$ is a free
$A$-module. In other words, we may assume $A' = B$.

In the notation of Lemma~\ref{sublemhopf}, we will prove that each of the inclusions $A_j \subset B_j$ is faithfully  flat, which
will suffice; in fact we only need to check flatness by Lemma~\ref{NAK} again. 
Thus, we may assume $A$ and $B$ are finitely generated. By making a base
change, we may also assume that $k$ is \emph{algebraically closed.} Then $\spec
B$ is an affine group scheme $G$, of finite type over $k$, and $X = \spec A$ is
a scheme acted on by $G$.
There is a map of $G$-schemes
\( p\colon  G \to X  \)
which is dominant (since $A \subset B$). 
Since $X$ is an integral scheme, generic flatness (see, e.g.,
\cite[Theorem 14.4]{eisenbud}) implies that there exists a
nonempty
open subset $U \subset X$ such that  $p^{-1}(U) \to U \to X$ is flat. Thus for any $g
\in G(k)$, we have that
\( gp^{-1}(U) \to X   \)
is flat, and since $G$ is the union of the $g p^{-1}(U)$ for $ g \in G(k)$,  we find
that $G \to X$ is flat. 
\end{proof}

Using the above technical tools, we can now deduce our main result. 

\begin{theorem}[Hopkins-Mahowald \cite{HM}] 
The map $\tmf \to H \mathbb{Z}/2$ induces an injection on mod 2 homology, and
we have an identification
\[ H_*(\tmf; \mathbb{Z}/2) = \mathbb{Z}/2[ \zeta_1^8,
\zeta_2^4, \zeta_3^2,
\zeta_4, \zeta_5, \dots ] \subset \A.  \]
\end{theorem}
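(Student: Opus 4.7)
The plan is to imitate the derivation of $H_*(ko;\mathbb{Z}/2)$ from $H_*(ku;\mathbb{Z}/2)$ done earlier, replacing the role of Wood's equivalence $ko\wedge\Sigma^{-2}\mathbb{CP}^2\simeq ku$ with the Hopkins--Mahowald equivalence \thref{tmfwedgeDA1}, namely $\tmf\wedge DA(1)\simeq BP\langle 2\rangle$, and the role of the two-cell complex $\Sigma^{-2}\mathbb{CP}^2$ with the eight-cell complex $DA(1)$.

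First I would verify that $\tmf\to H\mathbb{Z}/2$ is injective on mod $2$ homology. The inclusion of the bottom cell $S^0\to DA(1)$ induces a map $\tmf\to\tmf\wedge DA(1)\simeq BP\langle 2\rangle$, and composing further with $BP\langle 2\rangle\to H\mathbb{Z}/2$ recovers the unit map. By \thref{BPn} (with $n=2$, $p=2$), the composite $BP\langle 2\rangle\to H\mathbb{Z}/2$ is injective on homology with image $\mathbb{Z}/2[\overline{\xi_1}^2,\overline{\xi_2}^2,\overline{\xi_3}^2,\overline{\xi_4},\overline{\xi_5},\dots]\subset\A$. Moreover the K\"unneth theorem (applicable because $H_*(DA(1);\mathbb{Z}/2)$ is free of rank eight over $\mathbb{Z}/2$) gives an identification
\[
H_*(\tmf\wedge DA(1);\mathbb{Z}/2)\simeq H_*(\tmf;\mathbb{Z}/2)\otimes H_*(DA(1);\mathbb{Z}/2),
\]
so $H_*(\tmf;\mathbb{Z}/2)$ sits inside $H_*(\tmf\wedge DA(1);\mathbb{Z}/2)$ as a direct summand via the bottom cell of $DA(1)$. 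Combining these two facts yields that $H_*(\tmf;\mathbb{Z}/2)\hookrightarrow\A$. Since $\tmf\to H\mathbb{Z}/2$ is a map of ring spectra, the image is in addition a comodule \emph{subalgebra} of $\A$ contained in $H_*(BP\langle 2\rangle;\mathbb{Z}/2)$, and it has the graded dimension forced by the identity
\[
\dim_{\mathbb{Z}/2}H_*(BP\langle 2\rangle;\mathbb{Z}/2)=8\cdot\dim_{\mathbb{Z}/2}H_*(\tmf;\mathbb{Z}/2)
\]
in each bidegree (reading off the Hilbert series of $H_*(DA(1);\mathbb{Z}/2)\simeq \mathbb{Z}/2[\xi_1^2]/(\xi_1^8)\otimes\mathbb{Z}/2[\overline{\xi_2}^2]/(\overline{\xi_2}^4)$).

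Next I would identify this subalgebra. The candidate $\mathcal{C}_0\stackrel{\mathrm{def}}{=}\mathbb{Z}/2[\overline{\xi_1}^8,\overline{\xi_2}^4,\overline{\xi_3}^2,\overline{\xi_4},\overline{\xi_5},\dots]$ is easily seen, via the diagonal formula \eqref{dualA}, to be an $\A$-subcomodule (and clearly a subalgebra) of $H_*(BP\langle 2\rangle;\mathbb{Z}/2)$, and its graded dimension matches the one computed above. It therefore suffices to show that any comodule subalgebra $\mathcal{C}\subset H_*(BP\langle 2\rangle;\mathbb{Z}/2)$ with the graded dimension of $\mathcal{C}_0$ must equal $\mathcal{C}_0$. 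This is the analog of \thref{dimdet}, and I would argue it by precisely the same method: because $H_*(BP\langle 2\rangle;\mathbb{Z}/2)$ is a polynomial ring, hence a graded connected domain, \thref{freecomodule} applies and gives that $H_*(BP\langle 2\rangle;\mathbb{Z}/2)$ is free of rank $8$ over any such $\mathcal{C}$. If $\mathcal{C},\mathcal{C}'$ were two such subalgebras, then the comodule subalgebra $\mathcal{C}''$ they generate still lies inside $H_*(BP\langle 2\rangle;\mathbb{Z}/2)$, is properly contained in it (there are no elements in the forbidden low degrees), yet $H_*(BP\langle 2\rangle;\mathbb{Z}/2)$ must be free of rank at least $2$ over $\mathcal{C}''$, which forces $\mathcal{C}=\mathcal{C}'=\mathcal{C}''=\mathcal{C}_0$ on dimension grounds.

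The step I expect to require the most care is the uniqueness argument of the last paragraph: one must confirm that \thref{freecomodule} applies to the polynomial ring $H_*(BP\langle 2\rangle;\mathbb{Z}/2)$ (which is fine, as it is commutative graded connected and a domain, and inherits a comodule algebra structure over $\A$), and that the dimension count in low degrees indeed forces $\mathcal{C}''\neq H_*(BP\langle 2\rangle;\mathbb{Z}/2)$ whenever $\mathcal{C}$ has the graded dimension of $\mathcal{C}_0$; this follows because $\mathcal{C}_0$ vanishes in degrees $2$ and below the relevant threshold, so that no comodule algebra with that Hilbert series can contain $\overline{\xi_1}^2$. Once these points are verified, the displayed description of $H_*(\tmf;\mathbb{Z}/2)$ follows immediately.
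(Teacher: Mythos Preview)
Your overall strategy is exactly the paper's: use $\tmf\wedge DA(1)\simeq BP\langle 2\rangle$, invoke \thref{BPn} for $H_*(BP\langle 2\rangle;\mathbb{Z}/2)$, read off the graded dimension of $H_*(\tmf;\mathbb{Z}/2)$ from K\"unneth, and then prove a uniqueness statement for comodule subalgebras of $H_*(BP\langle 2\rangle;\mathbb{Z}/2)$ with that Hilbert series via \thref{freecomodule}.

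There is, however, a real gap in your uniqueness argument. You argue that $\mathcal{C}''\ne H_*(BP\langle 2\rangle;\mathbb{Z}/2)$ (since $\overline{\xi_1}^2\notin\mathcal{C}''$), conclude the rank of $H_*(BP\langle 2\rangle;\mathbb{Z}/2)$ over $\mathcal{C}''$ is at least $2$, and then say this ``forces $\mathcal{C}=\mathcal{C}'=\mathcal{C}''$ on dimension grounds.'' That worked in the $ko$ case because the target rank was $2$; here it is $8$, and rank $\ge 2$ is not enough. Since $\mathcal{C}''$ is free over $\mathcal{C}$ (by \thref{freecomodule} again) of some rank $s$ with $rs=8$, the possibilities $r=2,s=4$ or $r=4,s=2$ are not excluded by your argument.

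The paper closes this gap with an additional lemma: it shows that the only elements of $\mathcal{C}''$ in degrees $<12$ are $1$ and $\xi_1^8$ (the degree-$8$ piece of each of $\mathcal{C},\mathcal{C}'$ must be primitive, hence equals $\xi_1^8$, and there is nothing else below degree $12$). From this one checks that the eight elements
\[
1,\ \xi_1^2,\ \xi_1^4,\ \xi_1^6,\ \overline{\xi_2}^2,\ \xi_1^2\overline{\xi_2}^2,\ \xi_1^4\overline{\xi_2}^2,\ \xi_1^6\overline{\xi_2}^2
\]
are linearly independent in $H_*(BP\langle 2\rangle;\mathbb{Z}/2)\otimes_{\mathcal{C}''}\mathbb{Z}/2$ (the top-degree one requires observing $\xi_1^6\overline{\xi_2}^2$ is not primitive modulo $\xi_1^8$, hence is not in $\mathcal{C}''$). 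This forces $r\ge 8$, hence $r=8$ and $\mathcal{C}=\mathcal{C}''=\mathcal{C}'$. You should add this step.
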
 
\begin{proof} 
In fact, we know that 
\[ H_*( BP \left \langle 2\right\rangle; \mathbb{Z}/2) \simeq \mathbb{Z}/2[
\zeta_1^2, \zeta_2^2, \zeta_3^2,
\zeta_4, \zeta_5, \dots ],  \]
by Proposition~\ref{BPnhomology}; the map comes from the truncation $BP \left \langle
2\right\rangle \to H \mathbb{Z}_{(2)} \to H \mathbb{Z}/2$ which embeds
$H_*(BP\left \langle 2\right\rangle; \mathbb{Z}/2)$ as a subcomodule of $\A$. 
We also know that $H_*(\tmf; \mathbb{Z}/2)$ is a comodule algebra, and the factorization
\[ \tmf \to \tmf \wedge DA(1) \simeq BP \left \langle 2\right\rangle \to H
\mathbb{Z}/2  \]
shows that it is a subcomodule algebra of $H_*(BP\left \langle 2\right\rangle;
\mathbb{Z}/2) \subset \A$. Moreover, the K\"unneth formula shows that the graded
dimension of $H_*(\tmf; \mathbb{Z}/2)$ is that of $\mathbb{Z}/2[\zeta_1^8,
\zeta_2^4,
\zeta_3^2, \zeta_4, \dots ]$.

We will show that if $\mathcal{C}$ is any  subcomodule
algebra of $H_*(BP \left \langle 2\right\rangle; \mathbb{Z}/2)$ with the same
graded dimension as $\mathbb{Z}/2[\zeta_1^8,
\zeta_2^4,
\zeta_3^2, \zeta_4, \dots ]$, then $\mathcal{C}$ is in fact $\mathbb{Z}/2[\zeta_1^8,
\zeta_2^4,
\zeta_3^2, \zeta_4, \dots ]$ (which is easily checked to be a
valid subcomodule algebra). Here we will show that any two $\mathcal{C},
\mathcal{C}'$ satisfying that condition are equal. 
In fact, if $\mathcal{C}, \mathcal{C}'$ satisfy the condition, then
Proposition~\ref{freecomodule}
shows that $H_*(BP\left \langle 2\right\rangle; \mathbb{Z}/2)$ is free (necessarily of rank eight) over
each of $\mathcal{C}, \mathcal{C}'$. Consider the subcomodule algebra
$\mathcal{C}'' \subset H_*(BP\left \langle 2\right\rangle; \mathbb{Z}/2)$
generated by $\mathcal{C}, \mathcal{C}'$.
It also has the property that $H_*(BP;
\mathbb{Z}/2)$ is free over $\mathcal{C}''$, by Proposition~\ref{freecomodule} again. 
By counting the rank, we will arrive at a contradiction. We need first:

\begin{lemma} 
The  only elements of $\mathcal{C}''$ in degrees $ < 12$ are $1$ and $\xi_1^8 =
\zeta_1^8$.
\end{lemma} 
\begin{proof} 
In fact, we know that
\( \dim \mathcal{C}_8 = \dim \mathcal{C}'_8 = 1,  \)
and since this is the smallest dimension of a nonzero element in each of
$\mathcal{C}, \mathcal{C}'$, the generating element must be primitive. The
primitive elements in the dual Steenrod algebra $\A$, considered as a comodule
over itself, are $1$
and $\{\xi_1^{2^n}\}_{n
\geq 0}$, though, so $\mathcal{C}_8 =
\mathcal{C}'_8$ is generated by $\xi_1^{8}$. There are no other elements in
degrees $<12$ in $\mathcal{C}$ or $\mathcal{C}'$. 
\end{proof} 
We claim that the eight elements $1, \xi_1^2, \xi_1^4, \xi_1^6,
\zeta_2^2,
\xi_1^2 \zeta_2^2, \xi_1^4 \zeta_2^2, \xi_1^6
\zeta_2^2  \in H_*(BP\left \langle
2\right\rangle; \mathbb{Z}/2)$ are linearly independent in $H_*(BP \left
\langle 2\right\rangle; \mathbb{Z}/2) \otimes_{\mathcal{C}''} \mathbb{Z}/2$
(i.e., could be taken as a subset of generators over $\mathcal{C}''$).
This is a consequence of the fact that the only elements in $\mathcal{C}''$ of
degree less than $12$ are $1, \xi_1^8$. 
Moreover, in  degree $12$, we observe that $\xi_1^6 \zeta_2^2
\notin \mathcal{C}''$ as it is not primitive modulo $\xi_1^8$. 
Consequently, the rank of $H_*(BP\left \langle 2\right\rangle ;
\mathbb{Z}/2)$ as a $\mathcal{C}''$-module must be at least
eight.  
This means that the graded dimension of $\mathcal{C}''$ must be equal to that
of $\mathcal{C}$, so $\mathcal{C} = \mathcal{C}' = \mathcal{C}''$.
\end{proof} 

The dual assertion describes the cohomology via
\[ H^*( \tmf; \mathbb{Z}/2) \simeq \mathcal{A} \otimes_{\mathcal{A}(2)}
\mathbb{Z}/2,  \]
where $\mathcal{A}(2) \subset \mathcal{A}$ is the subalgebra of the Steenrod
algebra $\mathcal{A}$ generated by $\mathrm{Sq}^1, \mathrm{Sq}^2,
\mathrm{Sq}^4$. In fact, $H^*(\tmf; \mathbb{Z}/2)$ is cyclic over
$\mathcal{A}$ since $H_*(\tmf; \mathbb{Z}/2) \subset \A$. Dimensional
restrictions force $\mathrm{Sq}^1, \mathrm{Sq}^2$, and $\mathrm{Sq}^4$ to
annihilate the generator in degree zero, and this produces a surjection
\[ \mathcal{A} \otimes_{\mathcal{A}(2)} \mathbb{Z}/2 \twoheadrightarrow H^*(\tmf;
\mathbb{Z}/2).  \]
Since $\mathcal{A}(2) \subset \mathcal{A}$ is a \emph{Hopf} subalgebra, 
$\mathcal{A}$ is free over $\mathcal{A}(2)$ by the results of \cite{MM}, and we
find that the graded dimensions of $\mathcal{A} \otimes_{\mathcal{A}(2)}
\mathbb{Z}/2 $ and $H^*(\tmf; \mathbb{Z}/2)$ match. This proves the asserted
description of the cohomology. 

By the change-of-rings theorem $\mathrm{Ext}^{s,t}_{\mathcal{A}}( \mathcal{A}
\otimes_{\mathcal{A}(2)} \mathbb{Z}/2, \mathbb{Z}/2) \simeq
\mathrm{Ext}^{s,t}_{
\mathcal{A}(2)}(\mathbb{Z}/2, \mathbb{Z}/2)$, we now conclude: 
\begin{corollary} 
The (mod 2) Adams spectral sequence for $\tmf$ runs
\[ \mathrm{Ext}_{\mathcal{A}(2)}^{s,t}( \mathbb{Z}/2, \mathbb{Z}/2) \implies
\pi_{t-s} \tmf \otimes \mathbb{Z}_{2}.  \]
\end{corollary}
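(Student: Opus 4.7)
The plan is to deduce this from the general convergence theory of the Adams spectral sequence, combined with the identification of $H^*(\tmf;\mathbb{Z}/2)$ that was just established. The key observation is that $\tmf$ is a connective spectrum whose mod $2$ cohomology (equivalently, homology) is finitely generated in each degree, since $H_*(\tmf;\mathbb{Z}/2) \simeq \mathbb{Z}/2[\overline{\xi_1}^8,\overline{\xi_2}^4,\overline{\xi_3}^2,\overline{\xi_4},\ldots]$ has bounded dimension in each degree. Consequently, by the first Example in Section~2 (the classical Adams convergence statement recalled in the introduction), the cosimplicial cobar diagram $(H\mathbb{Z}/2)^\bullet(\tmf)$ realizes the $2$-adic completion $\tmf{}^{\wedge}_2$ as its totalization, and there is an associated strongly convergent spectral sequence
\[
E_2^{s,t} = \mathrm{Ext}^{s,t}_{\mathcal{A}_*}(\mathbb{Z}/2, H_*(\tmf;\mathbb{Z}/2)) \implies \pi_{t-s}(\tmf) \otimes \mathbb{Z}_2,
\]
where $\mathrm{Ext}$ is taken in the category of graded comodules over the dual Steenrod algebra $\mathcal{A}_*$.

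Next I would rewrite the $E_2$-page in terms of the cohomology. Dualizing the identification $H_*(\tmf;\mathbb{Z}/2) \subset \mathcal{A}_*$ as a subcomodule algebra produces the isomorphism $H^*(\tmf;\mathbb{Z}/2) \simeq \mathcal{A}\otimes_{\mathcal{A}(2)}\mathbb{Z}/2$ of left $\mathcal{A}$-modules established immediately before the corollary. Using the standard duality between the $\mathrm{Ext}$ over $\mathcal{A}_*$ in comodules and $\mathrm{Ext}$ over $\mathcal{A}$ in modules (valid because $H^*(\tmf;\mathbb{Z}/2)$ is of finite type in each degree), we can rewrite the $E_2$-page as
\[
E_2^{s,t} = \mathrm{Ext}^{s,t}_{\mathcal{A}}(\mathcal{A}\otimes_{\mathcal{A}(2)}\mathbb{Z}/2, \mathbb{Z}/2).
\]

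The final step is exactly the change-of-rings theorem displayed in the sentence preceding the corollary: since $\mathcal{A}(2)\subset\mathcal{A}$ is a sub-Hopf-algebra and $\mathcal{A}$ is free as a right $\mathcal{A}(2)$-module by \cite{MM}, one has a natural isomorphism
\[
\mathrm{Ext}^{s,t}_{\mathcal{A}}(\mathcal{A}\otimes_{\mathcal{A}(2)}\mathbb{Z}/2, \mathbb{Z}/2) \simeq \mathrm{Ext}^{s,t}_{\mathcal{A}(2)}(\mathbb{Z}/2, \mathbb{Z}/2),
\]
which yields the desired form of the $E_2$-page. Combining this with strong convergence to $\pi_*(\tmf)\otimes\mathbb{Z}_2$ completes the proof.

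There is essentially no obstacle here; the corollary is a formal consequence of the cohomology computation. The only step requiring any care is verifying the finite-type hypothesis needed both for convergence of the Adams spectral sequence and for the module-comodule $\mathrm{Ext}$ duality, but both follow immediately from the explicit polynomial description of $H_*(\tmf;\mathbb{Z}/2)$.
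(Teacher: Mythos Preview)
Your proposal is correct and follows essentially the same route as the paper: the corollary is stated immediately after the identification $H^*(\tmf;\mathbb{Z}/2)\simeq \mathcal{A}\otimes_{\mathcal{A}(2)}\mathbb{Z}/2$, and the paper's proof is nothing more than the single clause ``by the change-of-rings theorem $\mathrm{Ext}^{s,t}_{\mathcal{A}}(\mathcal{A}\otimes_{\mathcal{A}(2)}\mathbb{Z}/2,\mathbb{Z}/2)\simeq \mathrm{Ext}^{s,t}_{\mathcal{A}(2)}(\mathbb{Z}/2,\mathbb{Z}/2)$'' together with the Adams spectral sequence convergence recalled in Section~2. You have simply spelled out the finite-type and module/comodule duality verifications that the paper leaves implicit.
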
 

The Adams spectral sequence for $\tmf$ is displayed in \cite[Ch. 13]{TMF}. 
We remark that (as is well-known) it is not possible to continue this process, and realize modules
of the form $\mathcal{A} \otimes_{\mathcal{A}(n)} \mathbb{Z}/2$ for $n \geq 3$,
where $\mathcal{A}(n) \subset \mathcal{A}$ is generated by $\left\{\mathrm{Sq}^1, \dots,
\mathrm{Sq}^{2^n}\right\}$, because of the solution to the Hopf invariant one
problem.  

This method of computing the homology (and Adams-Novikov spectral sequence) of $\tmf$ 
is dependent on a key piece of prior computational knowledge about $\Tmf$:
namely, the gap theorem in the homotopy groups $\pi_* \Tmf$. It would be
interesting if one could give a theoretical explanation of the gap theorem. 
\bibliographystyle{alpha}
\bibliography{tmf}
\end{document}